\documentclass[12pt]{amsart}

\usepackage[T1]{fontenc}
\usepackage[utf8]{inputenc}
\usepackage{lmodern}
\usepackage{cite}
\usepackage{amsmath,amssymb,amsthm,amscd,mathtools,mathrsfs}
\numberwithin{equation}{section}

\newcommand{\PP}{\mathbb{P}}
\newcommand{\C}{\mathbb{C}}
\newcommand{\Q}{\mathbb{Q}}
\newcommand{\Z}{\mathbb{Z}}
\newcommand{\OO}{\mathcal{O}}
\newcommand{\I}{\mathcal{I}}
\newcommand{\M}{\mathcal{M}}
\newcommand{\Hilb}{\operatorname{Hilb}}
\newcommand{\ch}{\operatorname{ch}}
\newcommand{\Ext}{\operatorname{Ext}}
\newcommand{\Hom}{\operatorname{Hom}}
\newcommand{\Pic}{\operatorname{Pic}}

\newcommand{\td}{\operatorname{td}}
\newcommand{\rk}{\operatorname{rk}}
\newcommand{\vdim}{\operatorname{vdim}}
\newcommand{\Ttw}{\mathsf{T}}
\newcommand{\vfc}[1]{[#1]^{\mathrm{vir}}}
\newcommand{\dual}{\vee}

\theoremstyle{plain}
\newtheorem{theorem}{Theorem}[section]
\newtheorem{proposition}[theorem]{Proposition}
\newtheorem{lemma}[theorem]{Lemma}
\newtheorem{corollary}[theorem]{Corollary}
\theoremstyle{definition}
\newtheorem{definition}[theorem]{Definition}
\theoremstyle{remark}
\newtheorem{remark}[theorem]{Remark}

\title[Planar rank-one sheaves on $\PP^3$]{Planar rank-one sheaves on $\PP^3$, obstruction bundles, and divisor-supported Donaldson--Thomas series}
\author[Reginald Anderson]{Reginald Anderson}
\address{Department of Mathematics, University of California, Irvine, 340 Rowland Hall, Irvine, CA 92697-3875, USA}
\email{reginala@uci.edu}
\subjclass[2020]{14N35, 14C05, 14J60, 14F06}
\keywords{Donaldson--Thomas theory, Hilbert schemes of points, obstruction bundles, planar sheaves, Fano threefolds}
\date{}

\begin{document}

\begin{abstract}
Let $X=\PP^3$ and let
\[
\alpha_n=(0,1,-\tfrac12,\tfrac16-n)\in H^{\mathrm{even}}(X,\Q)
\]
with respect to the basis $1,H,H^2,H^3$, where $H=c_1(\OO_X(1))$.
We prove that every Gieseker semistable sheaf on $X$ with Chern character $\alpha_n$ is stable and uniquely of the form $\iota_{P*}\I_Z$ for a plane $P\subset X$ and a length-$n$ subscheme $Z\subset P$.
Hence the coarse stable-sheaf moduli space is the relative Hilbert scheme of points on the universal plane over the dual projective space $X^\dual$.
Using the standard perfect obstruction theory for stable sheaves on a Fano threefold, we identify the obstruction bundle with a relative Carlsson--Okounkov twisted tangent bundle and obtain a closed product formula
\[
\sum_{n\ge 0} \Gamma_n q^n=\prod_{m\ge 1}(1-q^m)^{-7}
\]
for the natural point-inserted two-dimensional Donaldson--Thomas invariants on $\PP^3$.
We then develop the analogous divisor-supported theory for a smooth divisor $D$ in a smooth projective Fano threefold, distinguishing moving and rigid divisors.
For a rigid divisor satisfying $H^1(D,\OO_D)=0$, the divisor-supported moduli component is a Hilbert scheme of points on $D$, its obstruction bundle is the twisted tangent bundle $\Ttw_D^{[n]}(N_{D/X})$, and its generating series is
\[
\prod_{m\ge 1}(1-q^m)^{-(c_2(X)\cdot D+D^3)}.
\]
We work out the examples of the exceptional divisor in $\operatorname{Bl}_p\PP^3$ and of a rigid section in a Fano $\PP^1$-bundle over $\PP^1\times\PP^1$.
\end{abstract}

\maketitle

\section{Introduction}

Donaldson--Thomas theory was originally formulated as a virtual count of stable sheaves on a smooth projective threefold, using the trace-free deformation-obstruction theory of Thomas \cite{Thomas} and the virtual fundamental class of Behrend--Fantechi \cite{BF}.
For zero-dimensional sheaves this gives the degree-zero Donaldson--Thomas theory of Hilbert schemes of points on threefolds, whose product formula is governed by the MacMahon function in the work of Maulik--Nekrasov--Okounkov--Pandharipande and Li \cite{MNOP,Li}.
For two-dimensional sheaves the geometry is more sensitive to the support divisor.
Even when the support is smooth and the sheaf is rank one on it, the threefold obstruction theory is not simply the obstruction theory of the Hilbert scheme of points on the support surface: deformations normal to the surface and their obstructions must be tracked inside the ambient threefold.

A recurring theme in surface-supported Donaldson--Thomas theory is that, after isolating a fixed divisor or a linear system of divisors, the remaining calculation can often be expressed in terms of Hilbert schemes of points on surfaces and tautological obstruction bundles.
This viewpoint appears, for example, in the work of Gholampour--Sheshmani on two-dimensional sheaves and modular forms, and in their later treatment of linear systems and punctual Hilbert schemes \cite{GSmod,GSlinear}.
On the surface side, the relevant universal integral is the Carlsson--Okounkov formula for Euler classes of twisted tangent bundles on Hilbert schemes of points \cite{CO}, building on the classical smoothness and structure of Hilbert schemes of points on surfaces due to Fogarty and G\"ottsche \cite{Fogarty,Gottsche}.

The purpose of the present paper is to give a completely explicit stable-sheaf calculation in the simplest moving-divisor situation: rank-one sheaves supported on planes in $\PP^3$.
The geometric reduction to Hilbert schemes of points on planes is natural, but the point of the paper is to make precise how this reduction interacts with the threefold Donaldson--Thomas obstruction theory as the plane varies.
The result is a closed eta-product series obtained directly from the standard stable-sheaf virtual class, with the moving support family accounted for by a point insertion on the dual projective space.
The same deformation-theoretic mechanism then gives a uniform formula for rigid smooth divisors $D$ in Fano threefolds satisfying $H^1(D,\OO_D)=0$.

Let $X=\PP^3$ and let $H=c_1(\OO_X(1))$.
We study Gieseker semistable sheaves on $X$ with Chern character
\begin{equation}\label{eq:intro-alpha}
\alpha_n=(0,1,-\tfrac12,\tfrac16-n),\qquad n\ge 0.
\end{equation}
These are pure two-dimensional sheaves with support class $H$.
Our first result shows that there is no hidden semistable or non-planar contribution: every such sheaf is stable and is uniquely of the form
\[
\iota_{P*}\I_Z,
\]
where $P\subset \PP^3$ is a plane and $Z\subset P$ is a length-$n$ subscheme.
Equivalently, if
\[
B:=X^\dual=(\PP^3)^\dual
\]
is the dual projective space parametrizing planes and
\[
q\colon \mathcal U\longrightarrow B
\]
is the universal plane, then the coarse stable-sheaf moduli space is the relative Hilbert scheme
\[
M_n:=\Hilb^n(\mathcal U/B).
\]

The scheme $M_n$ is smooth of dimension $2n+3$, while the stable-sheaf obstruction theory on $X$ has virtual dimension $3$.
The dimension difference is exactly supplied by a rank-$2n$ obstruction bundle coming from the normal direction to the universal plane.
Writing $\rho\colon M_n\to B$ for the support-plane map, set
\[
\mathcal U_n:=M_n\times_B\mathcal U,
\qquad
r\colon\mathcal U_n\to\mathcal U,
\qquad
\pi\colon\mathcal U_n\to M_n,
\]
and let $\mathcal I$ be the universal ideal sheaf on $\mathcal U_n$.
We identify the obstruction bundle with a relative Carlsson--Okounkov twisted tangent bundle:
\[
\mathrm{Ob}_n\cong
\mathcal Ext^1_{\pi}\bigl(\mathcal I,\mathcal I\otimes r^*N_{\mathcal U/(B\times X)}\bigr)
\cong
\Ttw_{\mathcal U/B}^{[n]}\bigl(\OO_{\mathcal U/B}(1)\bigr)\otimes \rho^*\OO_B(1).
\]
Consequently
\[
\vfc{M_n}=c_{2n}(\mathrm{Ob}_n)\cap [M_n].
\]
This is the main deformation-theoretic input: it compares the degree-$[1,2]$ truncated Ext theory of rank-zero sheaves on $\PP^3$ with the twisted tangent class on the Hilbert scheme of points on the moving support surface.

\subsection{Comparison with point Hilbert schemes on a threefold}
The series obtained here should not be confused with either of the standard point-counting series on a smooth threefold $Y$.
The ordinary topological Euler characteristic series is
\begin{equation}\label{eq:intro-cheah}
\sum_{n\ge 0} e\bigl(\Hilb^n(Y)\bigr)q^n=M(q)^{e(Y)},
\end{equation}
where
\[
M(q)=\prod_{m\ge 1}(1-q^m)^{-m}
\]
is the MacMahon function; see Cheah's formula for the Hodge polynomial of Hilbert schemes of points \cite{Cheah}.
By contrast, the degree-zero Donaldson--Thomas series is
\begin{equation}\label{eq:intro-dt0}
\sum_{n\ge 0}\deg\vfc{\Hilb^n(Y)}\,q^n=M(-q)^{\int_Y(c_3(Y)-c_1(Y)c_2(Y))},
\end{equation}
conjectured by Maulik--Nekrasov--Okounkov--Pandharipande and proved by Li \cite{Li,MNOP}; see also \cite[Remark~3.8]{BBS}.
For $Y=\PP^3$ this gives
\[
\sum_{n\ge 0} e\bigl(\Hilb^n(\PP^3)\bigr)q^n=M(q)^4,
\qquad
\sum_{n\ge 0}\deg\vfc{\Hilb^n(\PP^3)}\,q^n=M(-q)^{-20}
\]
with
\[
M(-q)^{-20}=1+20q+150q^2+400q^3-855q^4+\cdots.
\]
The planar theory treated here is a two-dimensional sheaf theory rather than a degree-zero theory.
Its natural series is not a MacMahon power, but an eta-power coming from Hilbert schemes of points on $\PP^2$.

\subsection{Main results}
We write $\M_X^{\mathrm{ss}}(\alpha_n)$ for the Simpson coarse moduli scheme of Gieseker semistable sheaves on $X$ with Chern character $\alpha_n$ and polarization $H$.
Our first theorem is the classification statement.

\begin{theorem}[Classification]\label{thm:A}
For every $n\ge 0$, every sheaf in $\M_X^{\mathrm{ss}}(\alpha_n)$ is Gieseker stable and uniquely of the form $\iota_{P*}\I_Z$, where $P\subset \PP^3$ is a plane and $Z\subset P$ is a length-$n$ subscheme.
Consequently
\[
\M_X^{\mathrm{ss}}(\alpha_n)=\M_X^{\mathrm{st}}(\alpha_n)\cong M_n=\Hilb^n(\mathcal U/B),
\]
and $M_n$ is smooth of dimension $2n+3$.
\end{theorem}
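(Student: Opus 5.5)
Let $F$ be Gieseker semistable with $\ch(F)=\alpha_n$. The plan has four steps: pin down the support, show $F$ is torsion-free rank one on it, identify $F$ with an ideal sheaf twist, then get stability and the moduli statement.

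\emph{Step 1: the support is a plane.} Semistability forces $F$ to be pure of dimension two. Since $\ch_1(F)=H$, the Fitting (scheme-theoretic) support divisor of $F$ has class $H$, so it is a plane $P$. The ideal $\I_P\cong\OO_X(-1)$ kills $F$ generically, because $F$ has multiplicity one along $P$. The image $\I_P F\subset F$ is then supported in dimension $\le 1$, and purity gives $\I_P F=0$. Hence $F=\iota_{P*}G$ for a coherent sheaf $G$ on $P\cong\PP^2$.

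\emph{Step 2: $G$ is torsion-free of rank one.} The sheaf $G$ is pure of dimension two on $P$, so it is torsion-free. Its rank is one because $\ch_1(F)=H$.

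\emph{Step 3: identify $G$ with $\I_Z$.} Compute $\ch(\iota_{P*}\OO_P(d))$ by Grothendieck--Riemann--Roch, or from the sequence $0\to\OO(d-1)\to\OO(d)\to\OO_P(d)\to0$. This gives $\ch(\iota_{P*}\OO_P)=(0,H,-H^2/2,H^3/6)$. Changing $d$ alters $\ch_2$ by $dH^2$, so matching $\ch_2=-\tfrac12$ forces $c_1(G)=0$ on $P$. A rank-one torsion-free sheaf on $\PP^2$ with $c_1=0$ satisfies $G\hookrightarrow G^{\vee\vee}=\OO_P$ with zero-dimensional cokernel, so $G=\I_Z$. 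The length of $Z$ is read off from $\ch_3=\tfrac16-n$, which gives $\mathrm{len}(Z)=n$.

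\emph{Step 4: stability, uniqueness and the moduli space.} Any proper saturated subsheaf of $F$ is also supported on $P$ and has rank one. Its quotient is then torsion, supported in dimension $\le1$, which contradicts purity unless the quotient is zero. So there are no destabilizing subsheaves of the same reduced polynomial, and every semistable $F$ is stable. Uniqueness follows because $P$ is determined as the Fitting support and $Z$ is the cosupport of $G\subset G^{\vee\vee}$.

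To upgrade the bijection to an isomorphism of schemes, I would do the above in families. On $M_n$, the universal family $(\iota\times\mathrm{id})_*\I$ gives a classifying morphism $M_n\to\M_X^{\mathrm{st}}(\alpha_n)$. For the inverse, take a flat family $\mathcal F$ over $S$. Its relative Fitting support is a flat family of planes, giving $S\to B$, and the double dual gives $\OO$ on each fibre. The subscheme $Z$ is recovered as the cokernel of $\mathcal F\to\mathcal F^{\vee\vee}$, which is flat by constancy of length, so $\mathcal F$ is $\I$ twisted by a line bundle from $S$. This yields the inverse morphism. Smoothness of $M_n$ of dimension $2n+3$ holds since $\mathcal U\to B$ is a smooth $\PP^2$-bundle over the 3-dimensional $B$, and Fogarty's theorem applies to relative Hilbert schemes of smooth surface families.

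I expect the main obstacle to be this family-level identification, in particular showing that the relative support and the double dual behave flatly in families. The pointwise classification in Steps 1--3 is routine.
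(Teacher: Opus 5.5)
Your pointwise classification (Steps 1--3) is correct and is essentially the paper's. Your argument that $\I_P F$ vanishes at the generic point of $P$, and is therefore zero by purity, is a more careful version of the paper's one-line passage from ``support cycle $H$'' to ``scheme-theoretic support a plane''. Your stability argument is also acceptable. It uses the standard criterion that a pure sheaf is stable once every proper nonzero saturated subsheaf (one with pure two-dimensional quotient) has smaller reduced Hilbert polynomial, and here no such subsheaf exists. Note that the purity that fails is that of the quotient, not of $F$. The paper instead compares Hilbert polynomials directly in Lemma~\ref{lem:rank-one-stable}.

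One misstatement matters later: what you use in Step~1 is the support \emph{cycle}, not the Fitting scheme. The zeroth Fitting ideal of $\iota_{P*}\I_Z$ is not $\I_P$. For $P=\{z=0\}$ and $Z$ the reduced origin, the presentation matrix $\left(\begin{smallmatrix} y & z & 0\\ -x & 0 & z\end{smallmatrix}\right)$ gives $\operatorname{Fitt}_0=z\cdot(x,y,z)$, which is a plane with an embedded point.

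The genuine gap is the scheme-theoretic identification. You rightly call it the crux but do not carry it out, and the tools you name fail.
\begin{itemize}
\item Fitting ideals commute with base change, so the relative Fitting support has these embedded-point schemes as fibres, and they are not flat. For $n=3$, three reduced points give $\operatorname{length}(\I_P/\operatorname{Fitt}_0)=3$, while $\I_Z=(x,y)^2$ gives $\operatorname{Fitt}_0=z\cdot(x,y,z)^2$ and length $4$.
\item Even granted some $a\colon S\to B$ with the right closed points, you must show that the ideal of $\mathcal U_S$ annihilates $\mathcal F$. Over $\C[\epsilon]$, fibrewise vanishing only tells you that the multiplication map is an element of $\Hom_X(F(-1),F)\cong H^0(\OO_P(1))$, i.e.\ a first-order motion of the plane. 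So $a$ must be extracted from $\mathcal F$ itself, e.g.\ as the kernel line subbundle of $H^0(\OO_X(1))\otimes\OO_S\to p_{S*}\mathcal Hom(\mathcal F,\mathcal F(1))$, after a base-change argument.
\item Relative double duals need not commute with base change, and constancy of fibre length gives flatness only over reduced bases. Yet non-reduced bases are exactly what a scheme-theoretic isomorphism must control.
\end{itemize}

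As it stands you only have a morphism $M_n\to\M_X^{\mathrm{st}}(\alpha_n)$ that is bijective on points. This says nothing about whether the target is reduced or normal.

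The paper avoids constructing an inverse altogether:
\begin{itemize}
\item The classifying map is proper and quasi-finite, hence finite.
\item $T_{[F]}\M_X^{\mathrm{st}}(\alpha_n)\cong\Ext^1_X(F,F)$ has dimension $2n+3$ by the divisor sequence $0\to\Ext^1_P(\I_Z,\I_Z)\to\Ext^1_X(F,F)\to H^0(\OO_P(1))\to 0$. So the target is regular and hence normal.
\item A finite morphism bijective on points is birational in characteristic zero, and Zariski's main theorem finishes.
\end{itemize}
This tangent-space computation is the ingredient your plan is missing.
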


Let $\rho\colon M_n\to B$ be the support-plane map.
The universal plane is a smooth divisor in $B\times X$ with normal bundle
\begin{equation}\label{eq:intro-normal}
N_{\mathcal U/(B\times X)}\cong \OO_{\mathcal U/B}(1)\otimes q^*\OO_B(1).
\end{equation}
If $\pi\colon M_n\times_B\mathcal U\to M_n$ and $r\colon M_n\times_B\mathcal U\to\mathcal U$ denote the two projections and $\mathcal I$ is the universal ideal sheaf, then the obstruction bundle is obtained from the groups $\Ext^1_P(\I_Z,\I_Z(1))$.

\begin{theorem}[Obstruction bundle]\label{thm:B}
Under the standard truncated perfect obstruction theory for rank-zero stable sheaves on $X$, the obstruction sheaf on $M_n$ is the rank-$2n$ vector bundle
\[
\mathrm{Ob}_n\cong \mathcal Ext^1_{\pi}\bigl(\mathcal I,\mathcal I\otimes r^*N_{\mathcal U/(B\times X)}\bigr)
\cong
\Ttw_{\mathcal U/B}^{[n]}\bigl(\OO_{\mathcal U/B}(1)\bigr)\otimes \rho^*\OO_B(1).
\]
Moreover
\[
\vfc{M_n}=c_{2n}(\mathrm{Ob}_n)\cap [M_n],
\qquad
\vdim M_n=3.
\]
\end{theorem}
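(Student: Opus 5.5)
The proof will compute the local Ext groups pointwise, show that they form vector bundles of constant rank, and then identify the bundle globally. Fix a point $[F]\in M_n$ with $F=\iota_{P*}\I_Z$, which is allowed by Theorem~\ref{thm:A}. The standard truncated obstruction theory on the Fano threefold $X$ has tangent space $\Ext^1_X(F,F)$ and obstruction space $\Ext^2_X(F,F)$. The trace-free truncation is not needed: $\Ext^3_X(F,F)=\Hom(F,F\otimes K_X)^\vee=0$, because $K_X=\OO(-4)$ and $F$ is stable. The pointwise computation uses the local-to-global spectral sequence for $\iota_{P*}$, coming from $L\iota_P^*\iota_{P*}\I_Z\simeq \I_Z\oplus \I_Z\otimes N_P^\vee[1]$ (a canonical triangle, split on a smooth divisor at least after passing to cohomology). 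This gives a long exact sequence
\[
\cdots\to\Ext^{k}_P(\I_Z,\I_Z)\to\Ext^k_X(F,F)\to\Ext^{k-1}_P(\I_Z,\I_Z(1))\to\Ext^{k+1}_P(\I_Z,\I_Z)\to\cdots,
\]
with $N_{P/X}=\OO_P(1)$. On $P=\PP^2$ we have $\Ext^2_P(\I_Z,\I_Z)=\Hom(\I_Z,\I_Z(-3))^\vee=0$, $\Ext^1_P(\I_Z,\I_Z)=T_Z\Hilb^n(P)$ of dimension $2n$, and $\Hom_P(\I_Z,\I_Z(1))=H^0(\OO_P(1))=\C^3$. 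For the twisted term, $\chi(\I_Z,\I_Z(1))=\chi(\OO(1))-2n=3-2n$ and $\Ext^2_P(\I_Z,\I_Z(1))=\Hom(\I_Z,\I_Z(-4))^\vee=0$, so $\Ext^1_P(\I_Z,\I_Z(1))$ has dimension $2n$. The sequence therefore gives $\dim\Ext^1_X(F,F)=2n+3=\dim M_n$, which matches smoothness, and $\Ext^2_X(F,F)\cong\Ext^1_P(\I_Z,\I_Z(1))$ of constant rank $2n$. The virtual dimension is $3$, and the obstruction spaces have constant dimension equal to $\dim M_n-\vdim$.

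The pointwise computation is then globalized over $M_n$. The relevant geometry is the universal family $\iota\colon\mathcal U_n\hookrightarrow M_n\times X$, with the universal sheaf $\mathcal F=\iota_*\mathcal I$. Note that $\mathcal U_n$ is a divisor in $M_n\times X$, namely the pullback of $\mathcal U\subset B\times X$, so its normal bundle is $r^*N_{\mathcal U/(B\times X)}$ and not the fiberwise $\OO_P(1)$. The same triangle, now relative over $M_n$, gives
\[
R\mathcal Hom_{p}(\mathcal F,\mathcal F)\simeq R\mathcal Hom_\pi(\mathcal I,\mathcal I)\to\ldots
\]
and combining it with the vanishings above yields $\mathcal Ext^2_p(\mathcal F,\mathcal F)\cong\mathcal Ext^1_\pi(\mathcal I,\mathcal I\otimes r^*N)$. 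By cohomology and base change this is locally free of rank $2n$. The line bundle identification $N_{\mathcal U/(B\times X)}\cong\OO_{\mathcal U/B}(1)\otimes q^*\OO_B(1)$ is \eqref{eq:intro-normal}: $\mathcal U$ is the zero locus of the tautological section of $\OO(1,1)$. Since $q^*\OO_B(1)$ is pulled back from the base, it comes out of $\mathcal Ext_\pi$ as $\rho^*\OO_B(1)$. What remains is exactly the relative Carlsson--Okounkov bundle $\Ttw^{[n]}_{\mathcal U/B}(\OO_{\mathcal U/B}(1))$, in the convention $\Ttw^{[n]}(L)=\mathcal Ext^1_\pi(\mathcal I,\mathcal I\otimes L)$, which the paper presumably adopts. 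Sign and duality conventions should be checked against the Carlsson--Okounkov definition $\chi(\OO,L)-\chi(\I,\I\otimes L)$; in our case $H^0(L)$ cancels against $\Hom$ and $H^1=H^2=0$.

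The final step uses the standard fact that for a perfect obstruction theory on a smooth scheme $M$ whose obstruction sheaf is locally free, the virtual class equals $c_{\mathrm{top}}(\mathrm{Ob})\cap[M]$. This holds because the intrinsic normal cone is the zero section of the obstruction bundle stack (Behrend--Fantechi, Prop.~5.6). It gives $\vfc{M_n}=c_{2n}(\mathrm{Ob}_n)\cap[M_n]$ and $\vdim=2n+3-2n=3$.

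I expect the main obstacle to be the global identification of the normal-direction twist. One has to check that the connecting maps in the relative triangle for $L\iota^*\iota_*$ behave uniformly in families, so that the isomorphism $\mathcal Ext^2_p\cong\mathcal Ext^1_\pi(\mathcal I,\mathcal I\otimes r^*N)$ is canonical and not only fiberwise. In particular, the twist by $\rho^*\OO_B(1)$, which is invisible pointwise, must be tracked correctly. The first thing I would try is Grothendieck duality for the divisor embedding, $\iota^!=L\iota^*(-)\otimes N[-1]$, applied to $R\mathcal Hom(\iota_*\mathcal I,\iota_*\mathcal I)=\iota_*R\mathcal Hom(L\iota^*\iota_*\mathcal I,\mathcal I)$. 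This makes the appearance of $N$ canonical.
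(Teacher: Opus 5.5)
Your proposal is correct and follows essentially the same route as the paper. Both arguments use the divisor triangle coming from $L\iota^*\iota_*$ (Lemma~\ref{lem:ext-sequence}) together with the $\PP^2$ vanishings, globalize it along $\mathcal U_n\subset M_n\times X$ with normal bundle $r^*N_{\mathcal U/(B\times X)}$, cancel $\mathcal Hom_\pi(\mathcal I,\mathcal I\otimes r^*\OO_{\mathcal U/B}(1))\cong\rho^*q_*\OO_{\mathcal U/B}(1)$ against the first term of the twisted tangent class, and invoke Behrend--Fantechi for the top Chern class. Your Serre duality $\Ext^2_P(\I_Z,\I_Z(1))\cong\Hom_P(\I_Z,\I_Z(-4))^\dual$ is in fact the correct one; the paper writes $\Hom_P(E,E(-2))^\dual$, which is harmless because both groups vanish.
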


The natural numerical invariant is obtained by integrating the point class from the base $B$.
If $h=c_1(\OO_B(1))$, define
\begin{equation}\label{eq:intro-gamma}
\Gamma_n:=\int_{\vfc{M_n}}\rho^*(h^3).
\end{equation}
The insertion $h^3$ is precisely what removes the three-dimensional freedom of moving the support plane.

\begin{theorem}[Planar product formula]\label{thm:C}
The planar Donaldson--Thomas invariants satisfy
\[
\Gamma_n=\int_{(\PP^2)^{[n]}} c_{2n}\Bigl(\Ttw_{\PP^2}^{[n]}\bigl(\OO_{\PP^2}(1)\bigr)\Bigr),
\]
and their generating series has the closed form
\begin{equation}\label{eq:intro-planar-series}
\sum_{n\ge 0}\Gamma_nq^n=\prod_{m\ge 1}(1-q^m)^{-7}.
\end{equation}
Equivalently,
\[
\sum_{n\ge 0}\rho_*\vfc{M_n}\,q^n=
\left(\prod_{m\ge 1}(1-q^m)^{-7}\right)[B]
\in A_3(B)[[q]].
\]
Thus the planar series is, up to the usual prefactor, the modular form $\eta(\tau)^{-7}$.
\end{theorem}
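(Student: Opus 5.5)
The plan is to reduce the integral over $M_n$ to a single fiber of $\rho$ and then invoke the Carlsson--Okounkov formula. By Theorem~\ref{thm:B}, $\vfc{M_n}=c_{2n}(\mathrm{Ob}_n)\cap[M_n]$ with $\mathrm{Ob}_n\cong \Ttw_{\mathcal U/B}^{[n]}(\OO_{\mathcal U/B}(1))\otimes\rho^*\OO_B(1)$. The class $h^3$ is the class of a point $b\in B$. The map $\rho\colon M_n\to B$ is smooth: it is the relative Hilbert scheme of the $\PP^2$-bundle $\mathcal U\to B$, and by Theorem~\ref{thm:A} it is smooth of relative dimension $2n$. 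Hence $\rho^*(h^3)\cap[M_n]=[\rho^{-1}(b)]=[(P)^{[n]}]$ for the plane $P$ corresponding to $b$. By the projection formula,
\[
\Gamma_n=\int_{P^{[n]}} c_{2n}\bigl(\mathrm{Ob}_n|_{P^{[n]}}\bigr).
\]
On the fiber, $\rho^*\OO_B(1)$ restricts to the trivial bundle, and the relative twisted tangent bundle restricts to $\Ttw_{P}^{[n]}(\OO_P(1))$. This restriction holds because $\mathcal Ext^1_\pi$ commutes with base change: $\Ext^0$ and $\Ext^2$ are of constant rank, which is exactly what makes the sheaf in Theorem~\ref{thm:B} a bundle. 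With $P\cong\PP^2$ this gives the first displayed formula.

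The same argument, applied to the pushforward, gives the $A_3(B)$ statement. The class $\rho_*(c_{2n}(\mathrm{Ob}_n)\cap[M_n])$ lies in $A_3(B)=\Z[B]$, and its coefficient is the degree of its cap product with the point class, which is $\Gamma_n$. The one point to check is that twisting by $\rho^*\OO_B(1)$ does not disturb this top-degree fiber computation. It does not: $c_{2n}(E\otimes L)=\sum_i c_i(E)c_1(L)^{2n-i}$, and every term carrying a positive power of $\rho^*h$ vanishes after capping with $\rho^*h^3$ on $B$, since $h^4=0$.

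For the series, I will use Carlsson--Okounkov \cite{CO}. For a smooth projective surface $S$ and a line bundle $L$, it gives
\[
\sum_n q^n\int_{S^{[n]}}c_{2n}\bigl(\Ttw_S^{[n]}(L)\bigr)=\prod_{m\ge1}(1-q^m)^{\langle c_1(L)-K_S,\,c_1(L)\rangle-e(S)},
\]
that is, with exponent $\int_S c_1(L)(c_1(L)-K_S)$ minus $e(S)$, possibly up to a sign convention. The main obstacle is getting this exponent and sign right, since the abstract asserts $-7$. With $S=\PP^2$, $L=\OO(1)$ and $K=-3\ell$, one has $L(L-K)=1\cdot 4=4$ and $e=3$. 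The correct Carlsson--Okounkov exponent is $-\langle L,K-L\rangle - e$ in the $\mathrm{Ext}$ convention $\chi(\OO,\OO)-\chi(\mathcal I,\mathcal I\otimes L)$. I will fix normalisation by checking $n=1$. For $n=1$, $\Ttw^{[1]}(L)=T_S\otimes L$, so $\Gamma_1=c_2(T_{\PP^2}(1))$. This equals $c_2(T)+c_1(T)+1=3+3+1=7$, matching the coefficient $7$ of $\prod(1-q^m)^{-7}$. The correct specialisation should therefore be exponent $-(e(S)+L\cdot(L-K_S))=-(3+4)=-7$, consistent also with the rigid-divisor exponent $c_2(X)\cdot D+D^3$. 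I would state the Carlsson--Okounkov formula in that normalisation, cross-check it against $n=1$ as above, and conclude \eqref{eq:intro-planar-series} and the $\eta^{-7}$ identification.
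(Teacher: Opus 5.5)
Your proposal is correct and is essentially the paper's proof. You restrict $c_{2n}(\mathrm{Ob}_n)$ to the fiber $\rho^{-1}([P])\cong P^{[n]}$, where $\rho^*\OO_B(1)$ is trivial, and then apply Carlsson--Okounkov with exponent $\langle L,K_S-L\rangle-e(S)=-4-3=-7$. Be aware that your first two written versions of the exponent, $\langle c_1(L)-K_S,c_1(L)\rangle-e(S)$ and $-\langle L,K_S-L\rangle-e(S)$, both evaluate to $+1$. Only your final normalisation $-(e(S)+L\cdot(L-K_S))$ agrees with the paper, and your check $\Gamma_1=c_2(T_{\PP^2}(1))=7$ confirms it.
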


The paper ends with the corresponding divisor-supported theory on a smooth divisor $D\subset X$ in a Fano threefold.
For rigid divisors satisfying $H^1(D,\OO_D)=0$, the moduli component is literally a Hilbert scheme of points on $D$.

\begin{theorem}[Rigid divisor series]\label{thm:D}
Let $X$ be a smooth projective Fano threefold, let $\iota\colon D\hookrightarrow X$ be a smooth divisor, let $N=N_{D/X}$, and fix a line bundle $L\in\Pic(D)$.
Assume that $D$ is rigid, equivalently $H^0(D,N)=0$, and that $H^1(D,\OO_D)=0$.
For each $n\ge 0$, let $\M_{D,L,n}(X)$ be the stable-sheaf moduli component whose closed points are the sheaves $\iota_*(\I_Z\otimes L)$ with $Z\in D^{[n]}$.
Then
\[
\M_{D,L,n}(X)\cong D^{[n]},
\qquad
\vfc{\M_{D,L,n}(X)}=c_{2n}\Bigl(\Ttw_D^{[n]}(N)\Bigr)\cap [D^{[n]}],
\]
where $\Ttw_D^{[n]}(N)$ is a rank-$2n$ vector bundle.
Its generating series is
\begin{equation}\label{eq:intro-rigid-series}
\sum_{n\ge 0}q^n\int_{\vfc{\M_{D,L,n}(X)}}1
=
\sum_{n\ge 0}q^n\int_{D^{[n]}}e\Bigl(\Ttw_D^{[n]}(N)\Bigr)
=
\prod_{m\ge 1}(1-q^m)^{-(c_2(X)\cdot D+D^3)}.
\end{equation}
\end{theorem}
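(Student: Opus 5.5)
We prove Theorem~\ref{thm:D} in three steps: identify the moduli space, identify the obstruction theory, then apply the Carlsson--Okounkov formula.

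\textbf{Step 1: the moduli component is $D^{[n]}$.} Rigidity $H^0(D,N)=0$ means $D$ is an isolated point of its Hilbert scheme (its deformations are unobstructed to first order, with zero tangent space). The support of any flat deformation of $\iota_*(\I_Z\otimes L)$ is governed by its Fitting support. Hence the support stays equal to $D$ under any infinitesimal deformation. The sheaf therefore remains an $\OO_D$-module on which $\I_Z\otimes L$ deforms as a torsion-free rank-one sheaf on $D$.

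Since $H^1(D,\OO_D)=0$, $\Pic(D)$ is discrete, so the determinant $L$ is locally rigid. Torsion-free rank-one sheaves with fixed determinant $L$ and $c_2=n$ are then exactly $\I_Z\otimes L$, and the functor is represented by $D^{[n]}$ via the universal ideal.

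To make this scheme-theoretic rather than pointwise, I would compare tangent spaces. From the local-to-global spectral sequence for $\iota^*\iota_*$, using the triangle $E\otimes N^{\vee}[1]\to L\iota^*\iota_*E\to E$ with $E=\I_Z\otimes L$, there is an exact sequence
\[
0\to \Ext^1_D(E,E)\to \Ext^1_X(\iota_*E,\iota_*E)\to \Hom_D(E,E\otimes N)\to \Ext^2_D(E,E)\to\cdots.
\]
Here $\Hom_D(E,E\otimes N)=H^0(N)=0$, because $E$ is rank one and torsion free with reflexive hull a line bundle. So $T=\Ext^1_D(E,E)=T_Z D^{[n]}$, using $H^1(\OO_D)=0$. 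Both spaces are smooth of the same dimension, so the natural morphism $D^{[n]}\to\M_{D,L,n}(X)$ is an isomorphism onto its image, which is open and closed.

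\textbf{Step 2: the obstruction bundle.} Continuing the long exact sequence, and running the same argument as in Theorem~\ref{thm:B}, the obstruction space is
\[
\Ext^2_X(\iota_*E,\iota_*E)_0\supseteq \Ext^1_D(E,E\otimes N).
\]
I expect the trace-free truncation to remove exactly the pieces $\Ext^2_D(E,E)=H^2(\OO_D)$ and $H^1(N)$. These are the terms that need care. By Serre duality on $D$ with $K_D=(K_X+D)|_D$, and by Kodaira vanishing on the Fano $X$, I would show that they either vanish or cancel through the trace map. This cancellation is the step I expect to be the main obstacle.

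Once it is settled, $\mathrm{Ob}_Z=\Ext^1_D(\I_Z,\I_Z\otimes N)$ (the twist by $L$ cancels). This is the fibre of the Carlsson--Okounkov bundle $\Ttw_D^{[n]}(N)=R\pi_*(\OO\otimes N)-R\mathcal Hom_\pi(\I,\I\otimes N)$, corrected by its constant part. The rank is $2n$ because $\chi(\OO,N)-\chi(\I_Z,\I_Z\otimes N)=2n$ by Riemann--Roch. Since $M$ is smooth and $\mathrm{Ob}$ is locally free, the standard result for a perfect obstruction theory with locally free obstruction sheaf on a smooth space gives $\vfc{M}=c_{2n}(\mathrm{Ob})\cap[M]$.

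\textbf{Step 3: Carlsson--Okounkov.} Their formula gives
\[
\sum_n q^n\int_{S^{[n]}}e\bigl(\Ttw^{[n]}(\mathcal L)\bigr)=\prod_{m\ge 1}(1-q^m)^{\langle \mathcal L,\mathcal L-K_S\rangle-e(S)},
\]
with the exponent up to sign convention. Here $\langle \mathcal L,\mathcal L-K_S\rangle=\int_D N\cdot(N-K_D)$. The remaining task is to rewrite the exponent in terms of $X$.

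On $D$, $N=D|_D$ and $K_D=(K_X+D)|_D$, so $N-K_D=-K_X|_D$. Therefore
\[
\int_D N(N-K_D)=-K_X\cdot D^2.
\]
For $e(D)=c_2(T_D)$, the sequence $0\to T_D\to T_X|_D\to N\to 0$ gives $c_2(D)=c_2(X)\cdot D-c_1(T_D)\cdot D|_D$. Combined with $c_1(T_D)=(-K_X-D)|_D$, this yields
\[
e(D)=c_2(X)\cdot D+K_X\cdot D^2+D^3.
\]
Hence the exponent is
\[
-K_XD^2-(c_2(X)D+K_XD^2+D^3)=-(c_2(X)\cdot D+D^3)+\bigl(-2K_X D^2\bigr).
\]
The extra term $-2K_XD^2$ does not match the stated exponent $-(c_2(X)\cdot D+D^3)$, so I need to recheck both the sign convention in Carlsson--Okounkov and the exact form of $\Ttw$. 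With the convention $\langle \mathcal L,\mathcal L-K\rangle$ replaced by the convention-dependent $\int \mathcal L(K-\mathcal L)$, the terms cancel to exactly $-(c_2(X)\cdot D+D^3)$. I would fix the convention by checking the case $D=\PP^2$, $N=\OO(1)$ against Theorem~\ref{thm:C}: there $c_2(X)\cdot D+D^3=6+1=7$, matching $\prod_{m\ge1}(1-q^m)^{-7}$.
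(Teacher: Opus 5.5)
\textbf{Steps 1 and 3.} These follow the paper. The moduli identification is the same tangent-space comparison via Lemma~\ref{lem:ext-sequence}, and your Fitting-support argument adds some rigor to the paper's brief justification. The exponent comes from Carlsson--Okounkov exactly as in the paper, which packages your two computations as
\[
\int_D c_2(T_D\otimes N)=c_2(T_D)+c_1(T_D)N+N^2=c_2(X)\cdot D+D^3,
\]
using $c(T_X|_D)=c(T_D)c(N)$. Fixing the sign convention by comparison with Theorem~\ref{thm:C} is circular, because that theorem rests on the same formula. The intrinsic check is $n=1$: there $\Ttw^{[1]}_D(N)=T_D\otimes N$, so the coefficient of $q$ is $\int_D c_2(T_D\otimes N)$. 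This forces the exponent to be $N\cdot(K_D-N)-e(D)$.

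\textbf{The gap: Step 2.} You leave this step open, and the mechanism you propose would not close it. With $H^0(D,N)=0$, Lemma~\ref{lem:ext-sequence} gives
\[
0\to \Ext^2_D(E,E)\to \Ext^2_X(F,F)\to \Ext^1_D(E,E\otimes N)\to 0 .
\]
So $\Ext^1_D(E,E\otimes N)$ is a quotient of the obstruction space, not a subspace. The kernel is
\[
\Ext^2_D(E,E)\cong\Hom_D(E,E\otimes K_D)^\dual\cong H^0(D,K_D)^\dual .
\]
No trace-free truncation removes this kernel. Since $H^1(\OO_X)=H^2(\OO_X)=0$, every class in $\Ext^2_X(F,F)$ is already trace-free, and the trace on $D$ does not enter the obstruction theory on $X$.

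What is actually needed is $p_g(D)=0$, and this does hold here. Because $H^0(K_X)=H^1(K_X)=0$, adjunction gives $H^0(D,K_D)\cong H^0(X,K_X+D)$. Suppose $K_X+D$ were effective. Then multiplication by one of its sections would embed $H^0(X,-K_X)$, of dimension $\tfrac12(-K_X)^3+3\ge 4$, into $H^0(X,\OO_X(D))$. But $H^0(X,\OO_X(D))$ is one-dimensional, since $H^0(D,N)=0$ and $H^1(\OO_X)=0$. The paper's Proposition~\ref{prop:general-ext} phrases this step as passing to trace-free parts; it is this vanishing that actually makes the step valid in the rigid case.

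\textbf{Other required vanishings.} $H^1(D,N)$ is likewise not something the trace removes. For the obstruction sheaf to be a rank-$2n$ bundle agreeing with the $K$-class $\Ttw^{[n]}_D(N)$, you need three more vanishings:
\begin{itemize}
\item $H^1(D,N)=H^2(D,N)=0$, which is Kodaira vanishing since $N-K_D=-K_X|_D$ is ample (Lemma~\ref{lem:kodaira-N});
\item $\Ext^2_D(E,E\otimes N)\cong\Hom_D(E,E\otimes K_X|_D)^\dual=0$, by Serre duality and stability (Proposition~\ref{prop:general-ext}).
\end{itemize}
Your Riemann--Roch count $\chi(N)-\chi(\I_Z,\I_Z\otimes N)=2n$ gives only the virtual rank. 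Without these vanishings, neither the constancy of $\dim\Ext^1_D(E,E\otimes N)$ nor its identification with the fibre of $\Ttw^{[n]}_D(N)$ follows. Consequently the formula $\vfc{\M_{D,L,n}(X)}=c_{2n}(\Ttw^{[n]}_D(N))\cap[D^{[n]}]$ is not yet established.
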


The planar case is the moving divisor $D=H\subset \PP^3$, for which $h^0(D,N)=3$ and the point insertion in \eqref{eq:intro-gamma} accounts for the three-dimensional support family.
The rigid examples of \S\ref{sec:examples} give the exponents $1$ and $2$ in \eqref{eq:intro-rigid-series}.

\section{Planar sheaves on $\PP^3$}\label{sec:planar}

Let $X=\PP^3$, let $H=c_1(\OO_X(1))$, and let
\[
\alpha_n=(0,1,-\tfrac12,\tfrac16-n).
\]
We work throughout with Gieseker stability with respect to $H$.

\subsection{Pushforward from a plane}
Let $\iota\colon P\hookrightarrow X$ be the inclusion of a plane $P\cong \PP^2$ and let $Z\subset P$ be a length-$n$ subscheme.
Since
\[
\ch(\I_Z)=1-n[\mathrm{pt}]\in H^{\mathrm{even}}(P,\Q)
\]
and $N_{P/X}\cong \OO_P(1)$, Grothendieck--Riemann--Roch yields
\begin{equation}\label{eq:grr-plane}
\ch(\iota_*\I_Z)=\iota_*\bigl((1-n[\mathrm{pt}])\,\td(\OO_P(1))^{-1}\bigr)
=(0,H,-\tfrac12H^2,(\tfrac16-n)H^3).
\end{equation}
Thus the class \eqref{eq:intro-alpha} is exactly the class of a rank-one torsion-free sheaf on a plane, pushed forward to $X$.

\subsection{A stability lemma on a surface}
The following standard lemma will be used repeatedly.

\begin{lemma}\label{lem:rank-one-stable}
Let $S$ be a smooth projective surface, let $A$ be an ample divisor on $S$, and let $E$ be a rank-one torsion-free sheaf on $S$.
Then $E$ is Gieseker stable with respect to $A$.
In particular, for every zero-dimensional subscheme $Z\subset S$, the ideal sheaf $\I_Z$ is Gieseker stable.
\end{lemma}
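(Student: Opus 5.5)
The plan is to check Gieseker stability straight from the definition, using reduced Hilbert polynomials with respect to $A$. Since $E$ is torsion-free of rank one on the surface $S$, it is pure of dimension two. Its reduced Hilbert polynomial is $p_E(m)=\chi(E(mA))$, because dividing by the rank changes nothing. So the task is to show that for every proper nonzero saturated subsheaf $F\subset E$ we have the strict inequality $p_F(m)<p_E(m)$ for $m\gg 0$. It is enough to treat subsheaves with $0<\rk F<\rk E$, or with torsion-free quotient, as the definition asks. The key observation is that there are none of the first kind.

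In order, the steps are as follows. First, let $0\ne F\subsetneq E$. Since $E$ is torsion-free, $F$ is torsion-free too, so $\rk F\ge 1$; hence $\rk F=1$. Second, this forces the quotient $Q=E/F$ to have rank zero, so $Q$ is a nonzero torsion sheaf. In particular $Q$ is not torsion-free, and $F$ is not saturated. In the formulation with pure quotients, no destabilizing quotient of smaller rank exists, so the stability condition holds vacuously.

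If we use the formulation over all proper subsheaves, we instead compare the Hilbert polynomials directly. Both $F$ and $E$ have rank one, so the leading coefficients of $p_F$ and $p_E$ agree. We also have $\chi(E(mA))-\chi(F(mA))=\chi(Q(mA))$. This is a polynomial with positive leading coefficient: for $Q$ of dimension one, the leading term is $(c_1(Q)\cdot A)\,m$ with $c_1(Q)\cdot A>0$ because $A$ is ample; for $Q$ of dimension zero, it is the positive constant $\operatorname{length}(Q)$. Therefore $p_F(m)<p_E(m)$ for $m\gg 0$, and $E$ is stable.

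The last sentence of the lemma follows by taking $E=\I_Z$, which is torsion-free of rank one. No step is a genuine obstacle. The only point needing care is the positivity of the leading coefficient of $\chi(Q(mA))$, which comes from ampleness of $A$ via asymptotic Riemann--Roch on the support of $Q$.
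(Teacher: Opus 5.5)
Your proof is correct and follows the paper's argument. Both show that any proper nonzero $F\subset E$ has rank one with nonzero quotient $Q$ of dimension at most one, and then obtain $p_F<p_E$ from the positive leading term of $\chi(Q(mA))$; the paper phrases the dimension-one case as $c_1$ dropping by a nonzero effective class, which is the same computation. Your extra remark that the saturated-subsheaf formulation makes the stability condition vacuous is a valid shortcut that the paper does not use.
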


\begin{proof}
Let $F\subsetneq E$ be a proper nonzero subsheaf.
Since $E$ is torsion-free of rank one, so is $F$.
Write $Q:=E/F$, so that $Q\neq 0$ has dimension at most one.
If $\dim Q=1$, then $c_1(F)=c_1(E)-C$ for a nonzero effective divisor class $C$.
The reduced Hilbert polynomial of a rank-one torsion-free sheaf on a surface has the form
\[
\frac{A^2}{2}m^2+\Bigl(c_1(\cdot)\cdot A-\frac{K_S\cdot A}{2}\Bigr)m+\chi(\cdot),
\]
so the coefficient of $m$ is strictly smaller for $F$ than for $E$.
Hence $p_F(m)<p_E(m)$ for $m\gg 0$.
If $\dim Q=0$, then $c_1(F)=c_1(E)$ and $\chi(F)=\chi(E)-\operatorname{length}(Q)<\chi(E)$, so again $p_F(m)<p_E(m)$ for $m\gg 0$.
Therefore $E$ is Gieseker stable.
\end{proof}

\subsection{Classification and moduli}
We now identify the moduli of sheaves of class $\alpha_n$.

For a locally Noetherian $\C$-scheme $T$, let $\mathscr H_n(T)$ be the set of pairs
\[
(a,\mathcal Z),\qquad
a\colon T\to B,
\qquad
\mathcal Z\subset \mathcal U_T:=T\times_{B,a}\mathcal U,
\]
such that $\mathcal Z\to T$ is finite and flat of degree $n$.
This is the relative Hilbert functor represented by $\Hilb^n(\mathcal U/B)$.
Let $\mathscr M_{\alpha_n}(T)$ be the set of equivalence classes of coherent sheaves $\mathcal F$ on $X\times T$ that are flat over $T$ and whose geometric fibers are Gieseker stable of Chern character $\alpha_n$, with two families equivalent if
\[
\mathcal F'\cong \mathcal F\otimes p_T^*A
\qquad\text{for }A\in\Pic(T),
\]
where $p_T\colon X\times T\to T$ is the projection.
Thus $\mathscr M_{\alpha_n}$ is the scalar-rigidified stable-sheaf functor; the unrigidified stable-sheaf stack retains the scalar automorphism group $\mathbb G_m$.
The Simpson moduli scheme $\M_X^{\mathrm{st}}(\alpha_n)$ corepresents this functor; see \cite[Theorem~1.21]{Simpson} and \cite[Section~3]{Thomas}.

\begin{theorem}\label{thm:classification}
Every Gieseker semistable sheaf $F$ on $X=\PP^3$ with $\ch(F)=\alpha_n$ is Gieseker stable and uniquely of the form
\[
F\cong \iota_{P*}\I_Z,
\]
where $P\subset X$ is a plane and $Z\subset P$ is a length-$n$ subscheme.
Consequently
\[
\M_X^{\mathrm{ss}}(\alpha_n)=\M_X^{\mathrm{st}}(\alpha_n)\cong \Hilb^n(\mathcal U/B).
\]
\end{theorem}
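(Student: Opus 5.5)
The plan is to first identify the scheme-theoretic support of $F$ as a plane, then show $F$ is a rank-one torsion-free sheaf on it, and finally upgrade the pointwise statement to an isomorphism of functors.

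\emph{Support.} Let $F$ be semistable with $\ch(F)=\alpha_n$. Then $F$ is pure of dimension two, and $\ch_1(F)=H$ says its fundamental support cycle is a single reduced plane $P$ with multiplicity one. Purity gives $\mathrm{Ass}(F)=\{\eta_P\}$. At the generic point, $F_{\eta_P}$ is a module of length one over $\OO_{X,\eta_P}$, so it is killed by the maximal ideal there, which is generated by the linear form $\ell$ defining $P$. Hence $\ell F$ is a subsheaf of $F$ vanishing at $\eta_P$. By purity it has no associated points, so $\ell F=0$, and therefore $F=\iota_{P*}E$ for a coherent sheaf $E$ on $P$. The sheaf $E$ is pure of dimension two, i.e.\ torsion-free, and of rank one. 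The same reasoning shows that any sheaf in the family is scheme-theoretically supported on a plane; this is the step I expect to need the most care.

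\emph{Identifying $E$ and stability.} A rank-one torsion-free sheaf on $P\cong\PP^2$ has the form $E\cong \I_Z\otimes\OO_P(d)$ with $Z$ zero-dimensional, since $E^{\vee\vee}$ is a line bundle. Running the Grothendieck--Riemann--Roch computation \eqref{eq:grr-plane} with a twist gives $\ch_2(\iota_*E)=(d-\tfrac12)H^2$, so $d=0$, and then $\ch_3$ forces $\operatorname{length}(Z)=n$. Gieseker stability on $X$ is checked on subsheaves, all of which are of the form $\iota_*F'$. Since $\iota^*\OO_X(1)=\OO_P(1)$, Hilbert polynomials with respect to $H$ on $X$ agree with those with respect to $\OO_P(1)$ on $P$. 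So stability of $F$ is equivalent to stability of $\I_Z$ on $P$, which is Lemma~\ref{lem:rank-one-stable}; thus semistable equals stable. For uniqueness, $P$ is recovered as the scheme-theoretic support of $F$, and $Z$ is recovered from $\I_Z=\iota^*F$ modulo torsion, or equivalently as the cokernel data of $E\hookrightarrow E^{\vee\vee}=\OO_P$. The last inclusion is canonical up to a scalar, because $\Hom(\I_Z,\OO_P)=\C$.

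\emph{Moduli.} I construct mutually inverse transformations between $\mathscr H_n$ and $\mathscr M_{\alpha_n}$ modulo twists. In one direction, $(a,\mathcal Z)\mapsto (\mathrm{id}\times a)^*$-pulled-back pushforward of $\I_{\mathcal Z}\subset\OO_{\mathcal U_T}$ to $X\times T$. This is flat, since $\I_{\mathcal Z}$ is $T$-flat, and has stable fibers of class $\alpha_n$. In the other direction, given a family $\mathcal F$, I take the Fitting ideal of $\mathcal F$ on $X\times T$. Its $0$-th Fitting support commutes with base change and on fibers equals $P_t$. This is exactly where flatness is needed: one should use the first Fitting ideal or the annihilator combined with the fiberwise statement to get a Cartier divisor $\mathcal P\subset X\times T$ that is flat over $T$ with plane fibers, hence a map $a\colon T\to B$. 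Then $\mathcal F=\iota_*\mathcal E$ on $\mathcal U_T$, and $\pi_*\mathcal Hom(\mathcal E,\OO_{\mathcal U_T})$ is a line bundle $A$ on $T$ by cohomology and base change, since fiberwise $h^0=1$ and the higher cohomology is constant. The evaluation map $\mathcal E\otimes \pi^*A\to\OO_{\mathcal U_T}$ is then fiberwise injective with cokernel $\OO_{Z_t}$, so it is injective with $T$-flat cokernel $\OO_{\mathcal Z}$, giving $\mathcal Z$. This also explains why the equivalence relation uses twists by $\Pic(T)$. Since $\Hilb^n(\mathcal U/B)$ represents $\mathscr H_n$, it represents the rigidified functor, hence is the coarse space $\M_X^{\mathrm{st}}(\alpha_n)$. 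Smoothness and dimension follow because it is a $\PP^2$-Hilbert scheme bundle over $B$.
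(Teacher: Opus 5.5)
Your pointwise analysis is sound. The purity argument showing $\ell F=0$ is more careful than the paper's passage from the support cycle to the scheme-theoretic support. The Chern character computation forcing $d=0$ and $\operatorname{length}(Z)=n$ is correct, as is the reduction of stability to Lemma~\ref{lem:rank-one-stable}.

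The gap is in the scheme-theoretic identification, where you try to build an inverse $\mathscr M_{\alpha_n}\to\mathscr H_n$ over an arbitrary base $T$. The tools you name do not do the job.
\begin{itemize}
\item The $0$-th Fitting ideal commutes with base change, but its fibres are not the planes $P_t$. At a reduced point $z\in Z$, in local coordinates $x,y,\ell$ with $P=V(\ell)$, the sheaf $\iota_*\I_Z$ has presentation matrix $\begin{pmatrix} y&\ell&0\\ -x&0&\ell\end{pmatrix}$. So $\mathrm{Fitt}_0=(x\ell,y\ell,\ell^2)=\ell\cdot\mathfrak m_z$, which cuts out $P$ with an embedded point at $z$ and is not Cartier.
\item The first Fitting ideal is $(x,y,\ell)$ there, so it sees $Z$ rather than $P$.
\item Annihilators do not commute with base change.
\item Even with a relative Cartier divisor $\mathcal P\subset X\times T$ with fibres $P_t$, knowing that its equation kills every fibre $\mathcal F_t$ does not imply that it kills $\mathcal F$ when $T$ is non-reduced. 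A map of $T$-flat sheaves can vanish on all fibres without vanishing, e.g.\ multiplication by $\epsilon$ over $\C[\epsilon]/(\epsilon^2)$.
\item The same issue affects your claim that $\pi_*\mathcal Hom(\mathcal E,\OO_{\mathcal U_T})$ is a line bundle. Constancy of fibre dimensions gives local freeness and base change only over reduced bases.
\end{itemize}

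What is missing is an infinitesimal input. You must show that every deformation of $\iota_{P*}\I_Z$ over an Artinian base is the pushforward of a deformation of the pair $(P,Z)$; this amounts to the moduli space being reduced, indeed smooth, at $[F]$.

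The paper gets exactly this from $\dim\Ext^1_X(F,F)=2n+3$ (Proposition~\ref{prop:tangent-obstruction-fiber}), and so never constructs an inverse transformation. It uses only the easy direction $\Phi\colon\Hilb^n(\mathcal U/B)\to\M_X^{\mathrm{st}}(\alpha_n)$:
\begin{itemize}
\item $\Phi$ is bijective on geometric points and proper, hence finite;
\item the tangent-space count makes the irreducible target regular, hence normal;
\item in characteristic zero $\Phi$ is birational, and Zariski's main theorem finishes.
\end{itemize}

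To rescue your functorial route you need that same tangent-space computation. For instance, it shows the Quot-scheme atlas is smooth, so your constant-fibre-dimension arguments can run on a reduced parameter space and then descend. As written, the inverse transformation, and hence the isomorphism of moduli spaces, is not established. If completed, your route would give the slightly stronger conclusion that $\Hilb^n(\mathcal U/B)$ is a fine moduli space for the rigidified functor.
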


\begin{proof}
Let $F$ be Gieseker semistable with $\ch(F)=\alpha_n$.
Since $\rk(F)=0$ and $c_1(F)=H$, the sheaf $F$ has dimension two.
Semistability implies purity, so $F$ has no embedded components of smaller dimension.
For a pure two-dimensional sheaf on a smooth threefold, the fundamental cycle equals $c_1(F)$.
Thus the support cycle of $F$ is $H$, the class of a plane with multiplicity one.
Therefore the scheme-theoretic support of $F$ is a unique plane $P\subset X$.
Writing $\iota\colon P\hookrightarrow X$ for the inclusion, there is a unique coherent sheaf $E$ on $P$ with $F\cong \iota_*E$.
The generic multiplicity of $F$ along $P$ is one, so $E$ has rank one on $P$.
Applying \eqref{eq:grr-plane} in reverse shows that $\ch(E)=(1,0,-n)$ on $P$.
Since $P\cong \PP^2$ is smooth, $E^{\dual\dual}$ is a line bundle of first Chern class $0$, hence $E^{\dual\dual}\cong \OO_P$.
The quotient $E^{\dual\dual}/E$ is zero-dimensional of length $n$, so $E\cong \I_Z$ for a unique $Z\in P^{[n]}$.
This proves the classification.

By Lemma~\ref{lem:rank-one-stable}, the sheaf $\I_Z$ is Gieseker stable on $P$.
Because
\[
\chi\bigl(X,\iota_*G(m)\bigr)=\chi\bigl(P,G(m)\bigr)
\]
for every coherent sheaf $G$ on $P$, the reduced Hilbert polynomial of $\iota_*G$ on $X$ agrees with that of $G$ on $P$.
Therefore $\iota_*\I_Z$ is Gieseker stable on $X$.
Hence semistability equals stability in the class $\alpha_n$.

It remains to prove the asserted scheme-theoretic identification.
Set
\[
H_n:=\Hilb^n(\mathcal U/B),
\qquad
\mathcal U_{H_n}:=H_n\times_B\mathcal U,
\]
and let $\mathcal Z\subset\mathcal U_{H_n}$ be the universal length-$n$ subscheme with ideal sheaf $\mathcal I_{\mathcal Z}$.
If
\[
j\colon \mathcal U_{H_n}\hookrightarrow H_n\times X
\]
is the natural closed immersion, define
\[
\mathcal F_{H_n}:=j_*\mathcal I_{\mathcal Z}.
\]
The morphism $\mathcal U_{H_n}\to H_n$ is a smooth $\PP^2$-bundle, so $\mathcal O_{\mathcal U_{H_n}}$ is flat over $H_n$, while $\mathcal O_{\mathcal Z}$ is flat over $H_n$ by the defining property of the relative Hilbert scheme.
The exact sequence
\[
0\longrightarrow\mathcal I_{\mathcal Z}
\longrightarrow\mathcal O_{\mathcal U_{H_n}}
\longrightarrow\mathcal O_{\mathcal Z}
\longrightarrow 0
\]
therefore shows that $\mathcal I_{\mathcal Z}$ is flat over $H_n$.
Since $j$ is a closed immersion over $H_n$, the sheaf $\mathcal F_{H_n}$ is also flat over $H_n$, and its formation commutes with arbitrary base change.
Its geometric fiber at $(P,Z)$ is $\iota_{P*}\mathcal I_Z$, which is stable of class $\alpha_n$ by the preceding argument.
More generally, the construction
\[
(a,\mathcal Z)\longmapsto (j_T)_*\mathcal I_{\mathcal Z}
\]
is compatible with base change and defines a natural transformation $\mathscr H_n\to\mathscr M_{\alpha_n}$.
Applied to the universal relative-Hilbert family, it gives a classifying morphism
\[
\Phi\colon H_n\longrightarrow \M_X^{\mathrm{st}}(\alpha_n)
\]
whose map on geometric points is $(P,Z)\mapsto\iota_{P*}\mathcal I_Z$.

The classification proved above shows that $\Phi$ is bijective on geometric points.
The scheme $H_n$ is projective, while $\M_X^{\mathrm{st}}(\alpha_n)$ is separated, so $\Phi$ is proper.
Its geometric fibers are finite, hence it is quasi-finite and therefore finite.
The $\PP^2$-bundle $\mathcal U\to B$ is Zariski-locally trivial, so $H_n\to B$ is locally a product with $\Hilb^n(\PP^2)$.
Fogarty's theorem therefore shows that $H_n$ is smooth and irreducible of dimension $d:=2n+3$, so the target is irreducible of dimension $d$.
At a point $[F]\in\M_X^{\mathrm{st}}(\alpha_n)$, stable-sheaf deformation theory gives
\[
T_{[F]}\M_X^{\mathrm{st}}(\alpha_n)\cong\Ext^1_X(F,F);
\]
see \cite[Proposition~3.26]{Thomas}.
The Ext calculation in Proposition~\ref{prop:tangent-obstruction-fiber}, which is independent of the present identification, gives
\[
\dim\Ext^1_X(F,F)=2n+3=d.
\]
Thus the local dimension and embedding dimension of $\M_X^{\mathrm{st}}(\alpha_n)$ agree at every closed point.
The target is therefore regular, and in particular normal.
Finally, because the ground field has characteristic zero and $\Phi$ is bijective on geometric points, the finite morphism $\Phi$ is birational.
A finite birational morphism onto a normal scheme is an isomorphism, proving the theorem.
\end{proof}

\begin{corollary}\label{cor:smooth-moduli}
The moduli space $M_n=\Hilb^n(\mathcal U/B)$ is smooth and irreducible of dimension $2n+3$.
\end{corollary}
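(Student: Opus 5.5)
The corollary follows essentially directly from the geometry of the relative Hilbert scheme, independently of the sheaf-theoretic identification in Theorem~\ref{thm:classification}. I would argue directly on $M_n=\Hilb^n(\mathcal U/B)$ via the structure map $\rho\colon M_n\to B$.

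\textbf{Local triviality.} First I would check that $q\colon\mathcal U\to B$ is Zariski-locally trivial with fibre $\PP^2$. Over the standard affine chart of $B=(\PP^3)^\dual$ where the coefficient $a_i$ of the plane $\sum a_jx_j=0$ is nonzero, one can solve for $x_i$. This identifies $\mathcal U|_{U_i}\cong U_i\times\PP^2$ via the remaining three coordinates. Formation of the relative Hilbert scheme commutes with base change. Hence $\rho^{-1}(U_i)\cong U_i\times\Hilb^n(\PP^2)$, with $U_i\cong\mathbb A^3$.

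\textbf{Smoothness, irreducibility, dimension.} By Fogarty's theorem \cite{Fogarty}, $\Hilb^n(\PP^2)$ is smooth and irreducible of dimension $2n$. Each open set $\rho^{-1}(U_i)$ is therefore smooth and irreducible of dimension $2n+3$, so $M_n$ is smooth of pure dimension $2n+3$. For irreducibility, the four open sets $\rho^{-1}(U_i)$ are irreducible and cover $M_n$. Any two of them meet, since $\rho^{-1}(U_i\cap U_j)\neq\emptyset$ because $U_i\cap U_j\neq\emptyset$ and $\rho$ is surjective. A space covered by pairwise intersecting irreducible opens is irreducible. Equivalently: $\rho$ is a smooth proper morphism with irreducible fibres over an irreducible base, and all components dominate $B$.

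\textbf{Expected difficulty.} There is essentially no obstacle; the only point needing care is base-change compatibility of the relative Hilbert functor $\mathscr H_n$ to justify the local product description. That compatibility is immediate from the functorial definition of $\mathscr H_n$ given above. Alternatively, one can simply cite the penultimate paragraph of the proof of Theorem~\ref{thm:classification}, where exactly this argument was used to show that $H_n$ is smooth and irreducible of dimension $d=2n+3$.
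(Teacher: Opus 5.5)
Your proposal is correct and follows essentially the same route as the paper: Zariski-local triviality of $\mathcal U\to B$, base change of the relative Hilbert scheme to get $V\times\Hilb^n(\PP^2)$ locally, and Fogarty's theorem. Your explicit charts (projecting from the coordinate point not on the plane) and your argument that pairwise-intersecting irreducible opens cover an irreducible space simply supply details the paper leaves implicit when it says the conclusion follows from $B$ being smooth and irreducible.
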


\begin{proof}
By Theorem~\ref{thm:classification}, the Simpson stable-sheaf moduli scheme is isomorphic to $M_n=\Hilb^n(\mathcal U/B)$, so it suffices to establish the assertions for the relative Hilbert scheme.
The morphism $q\colon \mathcal U\to B$ is a Zariski-locally trivial $\PP^2$-bundle.
Hence, over every trivializing open subset $V\subset B$, one has
\[
\Hilb^n(\mathcal U/B)|_V\cong V\times \Hilb^n(\PP^2).
\]
By Fogarty's theorem, $\Hilb^n(\PP^2)$ is smooth and irreducible of dimension $2n$ \cite{Fogarty}.
Since $B\cong\PP^{3\dual}$ is smooth and irreducible of dimension $3$, it follows that $M_n$ is smooth and irreducible of dimension $2n+3$.
\end{proof}

\begin{remark}\label{rem:joyce}
Because semistability equals stability in the class $\alpha_n$, Joyce's invariant class agrees with the virtual class of the stable-sheaf moduli space; see \cite{JoyceAbelian,Thomas}. This is the relevant wall-crossing framework for $\operatorname{coh}(X)$ when $X$ is a Fano threefold.
\end{remark}

\section{Deformation theory and the obstruction bundle}\label{sec:obstruction}

Let $\rho\colon M_n\to B$ be the support-plane morphism.
Set
\[
\mathcal U_n:=M_n\times_B\mathcal U,
\]
let
\[
\pi\colon \mathcal U_n\to M_n
\]
be the projection, let $j\colon \mathcal U_n\hookrightarrow M_n\times X$ be the natural closed immersion, and let $\mathcal I$ be the universal ideal sheaf of the universal zero-dimensional subscheme in $\mathcal U_n$.
The tautological family of stable sheaves on $M_n\times X$ is
\[
\mathcal F:=j_*\mathcal I.
\]

\subsection{The codimension-one Ext sequence}
We first recall the standard exact sequence for a regular divisor embedding.

\begin{lemma}\label{lem:ext-sequence}
Let $Y$ be a smooth projective threefold, let $i\colon S\hookrightarrow Y$ be a smooth divisor with normal bundle $N_{S/Y}$, and let $E,G$ be coherent sheaves on $S$.
Then there is a functorial exact sequence
\begin{align*}
0 &\to \Ext^1_S(E,G)
\to \Ext^1_Y(i_*E,i_*G)
\to \Hom_S(E,G\otimes N_{S/Y}) \\
&\to \Ext^2_S(E,G)
\to \Ext^2_Y(i_*E,i_*G)
\to \Ext^1_S(E,G\otimes N_{S/Y}) \to 0.
\end{align*}
Moreover, $\Ext^k_S(E,G)=0$ for all $k>2$, and the continuation of the sequence gives a canonical isomorphism
\[
\Ext^3_Y(i_*E,i_*G)\cong \Ext^2_S(E,G\otimes N_{S/Y}).
\]
There is also an Euler characteristic identity
\begin{equation}\label{eq:chi-general}
\chi_Y(i_*E,i_*G)=\chi_S(E,G)-\chi_S(E,G\otimes N_{S/Y}).
\end{equation}
\end{lemma}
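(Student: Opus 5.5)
The plan is to compute $\Ext^\bullet_Y(i_*E,i_*G)$ by adjunction, reducing it to the derived pullback $Li^*i_*E$. For a Cartier divisor $S\subset Y$ the Koszul resolution
\[
0\to \OO_Y(-S)\to\OO_Y\to i_*\OO_S\to 0
\]
shows that $Li^*i_*E$ has cohomology only in degrees $0$ and $-1$. Explicitly,
\[
\mathcal H^0(Li^*i_*E)=E,
\qquad
\mathcal H^{-1}(Li^*i_*E)=E\otimes N_{S/Y}^{\vee}.
\]
This is the standard local Tor computation, since $\mathcal Tor_1^{\OO_Y}(i_*E,\OO_S)\cong E\otimes\OO_Y(-S)|_S$. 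It gives a distinguished triangle on $S$:
\[
E\otimes N_{S/Y}^{\vee}[1]\to Li^*i_*E\to E\xrightarrow{+1}.
\]

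The next step is to apply $\operatorname{RHom}_S(-,G)$ to this triangle and use Grothendieck adjunction
\[
\operatorname{RHom}_Y(i_*E,i_*G)\cong \operatorname{RHom}_S(Li^*i_*E,G).
\]
This yields a triangle
\[
\operatorname{RHom}_S(E,G)\to \operatorname{RHom}_Y(i_*E,i_*G)\to \operatorname{RHom}_S(E,G\otimes N_{S/Y})[-1]\xrightarrow{+1}.
\]
Its long exact cohomology sequence reads
\[
\cdots\to\Ext^k_S(E,G)\to\Ext^k_Y(i_*E,i_*G)\to\Ext^{k-1}_S(E,G\otimes N)\to\Ext^{k+1}_S(E,G)\to\cdots.
\]
Since $S$ is a smooth projective surface, $\Ext^k_S=0$ for $k>2$ (Serre duality, or finite homological dimension $2$). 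Starting at $k=0$, the map $\Hom_S(E,G)\to\Hom_Y(i_*E,i_*G)$ is an isomorphism because $i_*$ is fully faithful. Hence the connecting map $\Ext^{-1}_S(\ldots)=0\to\Ext^1_S(E,G)$ is trivial, and the sequence begins with $0\to\Ext^1_S(E,G)$. The stated six-term sequence then follows, ending with a surjection onto $\Ext^1_S(E,G\otimes N)$ because the next term $\Ext^3_S(E,G)$ vanishes. The continuation in degree $3$ gives
\[
0=\Ext^3_S(E,G)\to\Ext^3_Y(i_*E,i_*G)\to\Ext^2_S(E,G\otimes N)\to\Ext^4_S(E,G)=0,
\]
which is the claimed canonical isomorphism.

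The Euler characteristic identity \eqref{eq:chi-general} follows immediately from additivity of $\chi$ on the triangle, with the shift $[-1]$ contributing the sign. Functoriality holds because every step (the Koszul triangle, adjunction and the long exact sequence) is functorial in $E$ and $G$.

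The main point requiring care is the identification of $\mathcal H^{-1}(Li^*i_*E)$ with $E\otimes N^\vee$ canonically on $S$, rather than only locally. I would verify this by computing $\mathcal Tor_1$ from the global Koszul resolution above, which is intrinsic.
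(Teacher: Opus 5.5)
Your proposal is correct and follows essentially the same route as the paper: you compute $Li^*i_*E$ from the Koszul resolution, use the truncation triangle $E\otimes N_{S/Y}^{\dual}[1]\to Li^*i_*E\to E$, and apply adjunction to obtain $R\Hom_S(E,G)\to R\Hom_Y(i_*E,i_*G)\to R\Hom_S(E,G\otimes N_{S/Y})[-1]$, with Serre duality giving vanishing above degree two and additivity giving $\chi$. One small remark: injectivity of $\Ext^1_S(E,G)\to\Ext^1_Y(i_*E,i_*G)$ already follows from $\Ext^{-1}_S(E,G\otimes N_{S/Y})=0$, so your appeal to full faithfulness of $i_*$ is harmless but unnecessary.
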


\begin{proof}
Since $S\subset Y$ is a smooth divisor, the inclusion $i\colon S\hookrightarrow Y$ is a regular closed immersion of codimension one, and $i_*\OO_S$ has the Koszul resolution
\[
0\longrightarrow \OO_Y(-S)\longrightarrow \OO_Y
\longrightarrow i_*\OO_S\longrightarrow 0,
\]
where $\OO_Y(-S)|_S\cong N_{S/Y}^{\dual}$.
The corresponding derived pullback calculation gives
\[
\mathcal H^0(Li^*i_*E)\cong E,
\qquad
\mathcal H^{-1}(Li^*i_*E)\cong E\otimes N_{S/Y}^{\dual},
\]
and no other cohomology sheaves.
The canonical truncation triangle is therefore
\[
(E\otimes N_{S/Y}^{\dual})[1]
\longrightarrow Li^*i_*E
\longrightarrow E
\longrightarrow (E\otimes N_{S/Y}^{\dual})[2].
\]
Applying $R\!\Hom_S(-,G)$ and using derived adjunction gives the distinguished triangle
\begin{align*}
R\!\Hom_S(E,G)
&\longrightarrow R\!\Hom_Y(i_*E,i_*G) \\
&\longrightarrow R\!\Hom_S(E,G\otimes N_{S/Y})[-1]
\longrightarrow R\!\Hom_S(E,G)[1].
\end{align*}
Equivalently, this gives the regular-immersion spectral sequence
\[
E_2^{p,q}=\Ext^p_S\bigl(E,G\otimes \textstyle\bigwedge^q N_{S/Y}\bigr)
\Longrightarrow \Ext^{p+q}_Y(i_*E,i_*G),
\qquad q=0,1;
\]
see also \cite[p.~1055]{GSlinear}.
The low-degree portion obtained by taking cohomology is the displayed exact sequence.
The normal bundle occurs because the derived pullback contains the conormal factor $N_{S/Y}^{\dual}$, which becomes $N_{S/Y}$ after applying $R\!\Hom_S(-,G)$.

Since $S$ is a smooth projective surface, derived Serre duality gives
\[
\Ext^k_S(E,G)
\cong \Ext^{2-k}_S(G,E\otimes K_S)^{\dual}=0
\qquad\text{for all }k>2.
\]
Thus the displayed sequence ends in zero, and its continuation gives the stated isomorphism in degree three.
Finally, taking alternating sums in the distinguished triangle gives the Euler characteristic identity; the shift by $[-1]$ accounts for the minus sign.
\end{proof}

For a plane $P\subset X=\PP^3$, write $i\colon P\hookrightarrow X$ for the inclusion.
The normal bundle is
\[
N_{P/X}\cong \OO_P(1).
\]
Consequently, the beginning of the exact sequence is
\[
0\longrightarrow \Ext^1_P(E,G)
\longrightarrow \Ext^1_X(i_*E,i_*G)
\longrightarrow \Hom_P(E,G(1))
\longrightarrow \Ext^2_P(E,G).
\]
When $E=G$, the first term describes deformations with the support plane fixed, whereas the Hom term records the component of an ambient deformation normal to $P$.
The specialization of Lemma~\ref{lem:ext-sequence} to $E=G=\I_Z$ will be applied in Proposition~\ref{prop:tangent-obstruction-fiber}.
We first record the required surface calculations.

\begin{lemma}\label{lem:surface-ext-vanishing}
Let $P\cong \PP^2$ and let $E=\I_Z$ for $Z\in P^{[n]}$.
Then
\[
\Ext^2_P(E,E)=0,
\qquad
\Ext^2_P(E,E(1))=0,
\qquad
\Hom_P(E,E(1))\cong H^0\bigl(\OO_P(1)\bigr).
\]
Moreover,
\[
\chi_P(E,E)=1-2n,
\qquad
\chi_P(E,E(1))=3-2n.
\]
\end{lemma}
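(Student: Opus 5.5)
I would prove the two vanishing statements by Serre duality on $P\cong\PP^2$, where $K_P\cong\OO_P(-3)$, and then reduce everything else to the short exact sequence $0\to\I_Z\to\OO_P\to\OO_Z\to 0$ and Riemann--Roch. For the first vanishing, Serre duality gives
\[
\Ext^2_P(E,E)\cong\Hom_P(E,E(-3))^{\dual}.
\]
A morphism $\I_Z\to\I_Z(-3)$ extends, after double-dualizing, to a morphism $\OO_P\to\OO_P(-3)$, and any such morphism vanishes. Since $\I_Z$ is torsion-free, restriction to the open set $P\setminus Z$ is injective on homomorphisms, so $\Hom_P(E,E(-3))=0$. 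In the same way,
\[
\Ext^2_P(E,E(1))\cong\Hom_P(E,E(-2))^{\dual}=0.
\]

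For the Hom statement, I would use the general fact that for a torsion-free rank-one sheaf $\I_Z$ one has $\Hom(\I_Z,\I_Z\otimes L)\subseteq\Hom(\I_Z,L)=H^0(L)$. The equality $\Hom(\I_Z,L)=H^0(L)$ holds because $\I_Z^{\dual}=\OO_P$ and $Z$ has codimension two. The problem is that a section $s\in H^0(\OO_P(1))$ gives, by multiplication, a map $\I_Z\to\I_Z(1)$, since multiplication by $s$ preserves the ideal. Hence every element of $H^0(L)$ lies in the image, and $\Hom_P(E,E(1))\cong H^0(\OO_P(1))$, which has dimension $3$. The key point is that multiplication by a global section of $\OO(1)$ maps $\I_Z$ into $\I_Z(1)$; this is immediate.

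For the Euler characteristics, I would use additivity in both arguments via $\ch(\I_Z)=(1,0,-n)$ and Hirzebruch--Riemann--Roch:
\[
\chi_P(E,G)=\int_P\ch(E)^{\dual}\ch(G)\td(P).
\]
Here $\ch(E)^{\dual}\ch(E)=1-2n[\mathrm{pt}]$. For $G=E(1)$ we get $\ch(E)^{\dual}\ch(E(1))=\ch(\OO(1))\bigl(1-2n[\mathrm{pt}]\bigr)$. Integrating against $\td(\PP^2)$, whose degree-$0$ part is $1$ and whose integral is $1$, gives
\[
\chi_P(E,E)=\chi(\OO_P)-2n=1-2n,
\qquad
\chi_P(E,E(1))=\chi(\OO_P(1))-2n=3-2n.
\]

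As a consistency check, $\hom(E,E)=1$ and $\ext^2=0$ give $\ext^1(E,E)=2n$, the tangent space to $P^{[n]}$. Likewise $\hom(E,E(1))=3$ and $\ext^2=0$ give $\ext^1(E,E(1))=2n$, matching the rank-$2n$ twisted tangent bundle.

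I expect no serious obstacle. The only subtle point is justifying the inclusion $\Hom(\I_Z,\I_Z\otimes L)\hookrightarrow H^0(L)$ via reflexive hulls and the codimension-two extension of sections (Hartogs), which I would state carefully.
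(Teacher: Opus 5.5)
Your proposal is correct and follows essentially the same route as the paper: Serre duality with $K_P\cong\OO_P(-3)$ reduces the two $\Ext^2$ vanishings to $\Hom_P(E,E(-3))=\Hom_P(E,E(-2))=0$, then $\Hom_P(E,E(1))\cong H^0(\OO_P(1))$ (the paper gets this from $\mathcal Hom_P(\I_Z,\I_Z)\cong\OO_P$, the sheaf-level form of your inclusion into $\Hom_P(\I_Z,\OO_P(1))$ combined with the multiplication maps), and finally Hirzebruch--Riemann--Roch with $\ch(E)^{\dual}\ch(E)=1-2n[\mathrm{pt}]$. The only difference is how you kill the two Hom groups: you pass to reflexive hulls and use $H^0(\OO_P(-k))=0$ together with torsion-freeness of the target, whereas the paper uses Gieseker stability of $E$ (Lemma~\ref{lem:rank-one-stable}) and the strict drop of the reduced Hilbert polynomial under twisting down, so your version is slightly more elementary and does not need the stability lemma.
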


\begin{proof}
By Serre duality,
\[
\Ext^2_P(E,E)\cong \Hom_P(E,E\otimes K_P)^\dual=\Hom_P(E,E(-3))^\dual,
\]
and similarly
\[
\Ext^2_P(E,E(1))\cong \Hom_P(E,E(-2))^\dual.
\]
Since $E$ is Gieseker stable by Lemma~\ref{lem:rank-one-stable}, while $E(-3)$ and $E(-2)$ are stable with strictly smaller reduced Hilbert polynomial, both Hom groups vanish.
Hence the two $\Ext^2$ groups vanish.

Because $E$ is rank one and torsion-free on the smooth surface $P$, we have $\mathcal Hom_P(E,E)\cong \OO_P$.
Tensoring by $\OO_P(1)$ gives $\mathcal Hom_P(E,E(1))\cong \OO_P(1)$, and therefore
\[
\Hom_P(E,E(1))\cong H^0\bigl(\OO_P(1)\bigr).
\]

Finally, the Riemann--Roch formula on $P\cong \PP^2$ with $\ch(E)=1-n[\mathrm{pt}]$ gives
\[
\chi_P(E,E)=\int_P (1-2n[\mathrm{pt}])\,\td(P)=1-2n,
\]
and
\[
\chi_P(E,E(1))=\int_P (1-2n[\mathrm{pt}])\,\ch\bigl(\OO_P(1)\bigr)\,\td(P)=3-2n.
\]
\end{proof}

\begin{proposition}\label{prop:tangent-obstruction-fiber}
Let $P\subset X$ be a plane, let $Z\in P^{[n]}$, and set $F=\iota_{P*}\I_Z$.
Then there are canonical exact sequences
\begin{equation}\label{eq:tangent-exact-fiber}
0\longrightarrow \Ext^1_P(\I_Z,\I_Z)
\longrightarrow \Ext^1_X(F,F)
\longrightarrow H^0\bigl(\OO_P(1)\bigr)
\longrightarrow 0
\end{equation}
and
\begin{equation}\label{eq:obstruction-fiber}
\Ext^2_X(F,F)\cong \Ext^1_P\bigl(\I_Z,\I_Z(1)\bigr).
\end{equation}
In particular,
\[
\dim \Ext^1_X(F,F)=2n+3,
\qquad
\dim \Ext^2_X(F,F)=2n.
\]
Therefore $\vdim M_n=3$.
\end{proposition}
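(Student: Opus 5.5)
The plan is to specialize Lemma~\ref{lem:ext-sequence} to $Y=X$, $S=P$, $E=G=\I_Z$ and $N_{P/X}\cong\OO_P(1)$, and then feed in the vanishing results of Lemma~\ref{lem:surface-ext-vanishing}. Substituting $\Ext^2_P(\I_Z,\I_Z)=0$ gives the six-term sequence
\[
0\to \Ext^1_P(\I_Z,\I_Z)\to \Ext^1_X(F,F)\to \Hom_P(\I_Z,\I_Z(1))\to 0\to \Ext^2_X(F,F)\to \Ext^1_P(\I_Z,\I_Z(1))\to 0 .
\]
The left half is \eqref{eq:tangent-exact-fiber}, once we use the identification $\Hom_P(\I_Z,\I_Z(1))\cong H^0(\OO_P(1))$ from Lemma~\ref{lem:surface-ext-vanishing}. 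The right half is the isomorphism \eqref{eq:obstruction-fiber}. Canonicity and functoriality are inherited from the functorial exact sequence in Lemma~\ref{lem:ext-sequence}.

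For the dimension count I first compute $\dim\Ext^1_P(\I_Z,\I_Z)$. We have $\Hom_P(\I_Z,\I_Z)=H^0(\OO_P)=\C$, because $\mathcal Hom(\I_Z,\I_Z)\cong\OO_P$ (alternatively, $\I_Z$ is stable and hence simple). Combining this with $\Ext^2_P(\I_Z,\I_Z)=0$ and $\chi_P(\I_Z,\I_Z)=1-2n$ gives $\dim\Ext^1_P(\I_Z,\I_Z)=2n$. Adding $h^0(\OO_P(1))=3$ then gives $\dim\Ext^1_X(F,F)=2n+3$.

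The obstruction group is handled the same way. By Lemma~\ref{lem:surface-ext-vanishing}, $\Hom_P(\I_Z,\I_Z(1))$ has dimension $3$ and $\Ext^2_P(\I_Z,\I_Z(1))=0$. With $\chi_P(\I_Z,\I_Z(1))=3-2n$ this gives $\dim\Ext^1_P(\I_Z,\I_Z(1))=3-(3-2n)=2n$. So $\dim\Ext^2_X(F,F)=2n$.

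Finally, the virtual dimension of the truncated (trace-free) theory is $\dim\Ext^1_X(F,F)-\dim\Ext^2_X(F,F)=3$. As a consistency check I will use \eqref{eq:chi-general}: $\chi_X(F,F)=(1-2n)-(3-2n)=-2$. I will also use Serre duality on $X$, which gives $\Ext^3_X(F,F)\cong\Hom_X(F,F(-4))^\vee=0$ by stability. Together with $\hom=1$ these yield $1-\operatorname{ext}^1+\operatorname{ext}^2=-2$, which matches $\operatorname{ext}^1-\operatorname{ext}^2=3$.

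I expect no real obstacle. The only subtle point is the identification $\mathcal Hom_P(\I_Z,\I_Z(1))\cong\OO_P(1)$ for nonreduced $Z$, but this is already part of Lemma~\ref{lem:surface-ext-vanishing}.
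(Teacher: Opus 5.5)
Your proposal is correct and follows the paper's proof: you specialize Lemma~\ref{lem:ext-sequence} using the vanishings of Lemma~\ref{lem:surface-ext-vanishing} to obtain both sequences and $\dim\Ext^1_X(F,F)=2n+3$, exactly as the paper does. The only minor variation is that you read off $\dim\Ext^2_X(F,F)=2n$ directly from \eqref{eq:obstruction-fiber} and $\chi_P(\I_Z,\I_Z(1))=3-2n$, whereas the paper derives it from $\chi_X(F,F)=-2$, $\dim\Hom_X(F,F)=1$ and $\Ext^3_X(F,F)=0$, which is precisely your consistency check.
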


\begin{proof}
Since $P\subset X=\PP^3$ is a hyperplane divisor, its normal bundle is
\[
N_{P/X}=N_{P/\PP^3}\cong \OO_X(P)|_P\cong\OO_P(1).
\]
Apply Lemma~\ref{lem:ext-sequence} with $Y=X=\PP^3$, $S=P$, and $E=G=\I_Z$.
By Lemma~\ref{lem:surface-ext-vanishing},
\[
\Ext^2_P(\I_Z,\I_Z)=0,
\qquad
\Hom_P(\I_Z,\I_Z(1))\cong H^0\bigl(\OO_P(1)\bigr).
\]
Moreover, $\Ext^3_P(\I_Z,\I_Z)=0$ because $P$ is a smooth projective surface, as recorded in Lemma~\ref{lem:ext-sequence}.
The exact sequence in that lemma therefore gives \eqref{eq:tangent-exact-fiber} and the isomorphism \eqref{eq:obstruction-fiber}.
The same lemma also gives $\chi_P(\I_Z,\I_Z)=1-2n$, while $\Hom_P(\I_Z,\I_Z)\cong\C$.
Together with the vanishing of $\Ext^2_P(\I_Z,\I_Z)$, this implies $\dim\Ext^1_P(\I_Z,\I_Z)=2n$.
This gives $\dim\Ext^1_X(F,F)=2n+3$.

The Euler characteristic identity \eqref{eq:chi-general} and Lemma~\ref{lem:surface-ext-vanishing} imply
\[
\chi_X(F,F)=\chi_P(\I_Z,\I_Z)-\chi_P\bigl(\I_Z,\I_Z(1)\bigr)=-2.
\]
Moreover $F$ is stable, so $\Hom_X(F,F)=\C$.
Since $K_X\cong\OO_X(-4)$, Serre duality on $X=\PP^3$, the projection formula, and the full faithfulness of pushforward along $\iota_P$ give
\[
\begin{aligned}
\Ext^3_X(F,F)
&\cong \Hom_X(F,F\otimes K_X)^\dual \\
&\cong \Hom_X\bigl(\iota_{P*}\I_Z,\iota_{P*}(\I_Z(-4))\bigr)^\dual \\
&\cong \Hom_P\bigl(\I_Z,\I_Z(-4)\bigr)^\dual \\
&\cong H^0\bigl(P,\OO_P(-4)\bigr)^\dual=0.
\end{aligned}
\]
For the last isomorphism, we use $\mathcal Hom_P(\I_Z,\I_Z)\cong\OO_P$.
Hence
\[
\dim \Ext^2_X(F,F)=\dim\Ext^1_X(F,F)-3=2n.
\]
This proves the virtual dimension statement.
\end{proof}

\subsection{Global identification of the obstruction bundle}
We now globalize the previous fiberwise computation.
The universal plane is the zero locus of the tautological section of
\[
\OO_B(1)\boxtimes \OO_X(1)
\]
on $B\times X$; therefore
\begin{equation}\label{eq:normal-bundle}
N_{\mathcal U/(B\times X)}\cong \OO_{\mathcal U/B}(1)\otimes q^*\OO_B(1).
\end{equation}

\begin{definition}\label{def:twisted-tangent-relative}
Let $S$ be a smooth projective surface and let $L$ be a line bundle on $S$.
If $\mathcal J$ denotes the universal ideal sheaf on $S\times S^{[n]}$ and $p\colon S\times S^{[n]}\to S^{[n]}$ is the projection, the Carlsson--Okounkov twisted tangent class is
\begin{equation}\label{eq:def-twisted-tangent}
\Ttw_S^{[n]}(L):= \chi(S,L)[\mathcal{O}_{S^{[n]}}] - Rp_*R\mathcal Hom(\mathcal J,\mathcal J\otimes L)\in K^0(S^{[n]}).
\end{equation}

For the relative family $q\colon \mathcal U\to B$, recall that $\rho\colon M_n=\Hilb^n(\mathcal U/B)\to B$ is the structure morphism and set $\mathcal U_n=M_n\times_B\mathcal U$.
The projections fit into the Cartesian square
\[
\begin{CD}
\mathcal U_n @>{r}>> \mathcal U \\
@V{\pi}VV @VV{q}V \\
M_n @>{\rho}>> B.
\end{CD}
\]
Let $\mathcal I$ be the universal ideal sheaf on $\mathcal U_n$.
We define the relative twisted tangent class by
\begin{equation}\label{eq:def-relative-twisted}
\begin{aligned}
\Ttw_{\mathcal U/B}^{[n]}\bigl(\OO_{\mathcal U/B}(1)\bigr)
:={}&\rho^*Rq_*\OO_{\mathcal U/B}(1) \\
&\quad-R\pi_*R\mathcal Hom\bigl(\mathcal I,\mathcal I\otimes r^*\OO_{\mathcal U/B}(1)\bigr)
\in K^0(M_n).
\end{aligned}
\end{equation}
By flat base change, there is a canonical identification
\[
\rho^*Rq_*\OO_{\mathcal U/B}(1)
\cong R\pi_*r^*\OO_{\mathcal U/B}(1).
\]
\end{definition}

\begin{theorem}\label{thm:obstruction-bundle}
The obstruction sheaf of the standard truncated perfect obstruction theory on $M_n$ for rank-zero stable sheaves on $X$ is the vector bundle
\begin{equation}\label{eq:ob-bundle}
\mathrm{Ob}_n\cong
\mathcal Ext^1_{\pi}\bigl(\mathcal I,\mathcal I\otimes r^*N_{\mathcal U/(B\times X)}\bigr)
\cong
\Ttw_{\mathcal U/B}^{[n]}\bigl(\OO_{\mathcal U/B}(1)\bigr)\otimes \rho^*\OO_B(1).
\end{equation}
It has rank $2n$, and the virtual class is
\begin{equation}\label{eq:virtual-class-top-chern}
\vfc{M_n}=c_{2n}(\mathrm{Ob}_n)\cap [M_n].
\end{equation}
\end{theorem}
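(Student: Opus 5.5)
The plan is to globalize Proposition~\ref{prop:tangent-obstruction-fiber}, run the codimension-one Ext sequence of Lemma~\ref{lem:ext-sequence} in families, and then use smoothness of $M_n$ (Corollary~\ref{cor:smooth-moduli}) to identify the virtual class with the top Chern class of the obstruction bundle.

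\textbf{Step 1: the family Ext triangle.} I will apply the argument of Lemma~\ref{lem:ext-sequence} to the divisor $j\colon\mathcal U_n\hookrightarrow M_n\times X$. This is the pullback of $\mathcal U\subset B\times X$ along $\rho\times\mathrm{id}$, so its normal bundle is $r^*N_{\mathcal U/(B\times X)}$. Put $N:=r^*N_{\mathcal U/(B\times X)}$. Computing $Lj^*j_*\mathcal I$ with the Koszul resolution, adjunction, and pushing forward along $\pi$ gives a distinguished triangle on $M_n$:
\[
R\pi_*R\mathcal Hom(\mathcal I,\mathcal I)\to Rp_{M*}R\mathcal Hom(\mathcal F,\mathcal F)\to R\pi_*R\mathcal Hom(\mathcal I,\mathcal I\otimes N)[-1]\to.
\]
Here $p_M\colon M_n\times X\to M_n$ is the projection.

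\textbf{Step 2: cohomology of the outer terms.} By Lemma~\ref{lem:surface-ext-vanishing}, fiberwise $\Ext^2_P(\I_Z,\I_Z)=0$ and $\Ext^2_P(\I_Z,\I_Z(1))=0$. The fiber dimensions are $\hom_P(\I_Z,\I_Z(1))=3$ and $\dim\Ext^1_P(\I_Z,\I_Z(1))=2n$, both constant. By cohomology and base change (the base $M_n$ is reduced and smooth), the sheaves $\mathcal Ext^i_\pi(\mathcal I,\mathcal I\otimes N)$ are locally free for $i=0,1$, of ranks $3$ and $2n$, and vanish for $i\ge2$.

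Taking the long exact sequence of the triangle in Step~1 gives
\[
\mathcal Ext^2_{p_M}(\mathcal F,\mathcal F)\cong\mathcal Ext^1_\pi(\mathcal I,\mathcal I\otimes N).
\]
This is the family version of \eqref{eq:obstruction-fiber}, and it is a rank-$2n$ vector bundle.

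\textbf{Step 3: the obstruction sheaf.} For stable sheaves on a Fano threefold, $\Ext^3_X(F,F)=0$ by the argument in Proposition~\ref{prop:tangent-obstruction-fiber}. Also $\Hom_X(F,F)=\C$ by stability. So the truncated theory $\tau^{[1,2]}$ has obstruction sheaf $\mathcal Ext^2_{p_M}(\mathcal F,\mathcal F)$, which gives the first isomorphism in \eqref{eq:ob-bundle}.

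I expect the main obstacle to be the compatibility check. The obstruction sheaf of the Thomas/Behrend--Fantechi theory is defined via the Atiyah class, and I must check it agrees with this $\mathcal Ext^2$. I would cite \cite{Thomas} for the general statement that $h^1(E^\dual)=\mathcal Ext^2_{p_M}(\mathcal F,\mathcal F)$, after twisting by a line bundle to obtain a universal sheaf if needed. Twisting does not change $\mathcal Ext$, and here $\mathcal F$ is already universal.

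\textbf{Step 4: comparison with the twisted tangent class.} By \eqref{eq:normal-bundle}, $N=r^*\OO_{\mathcal U/B}(1)\otimes\pi^*\rho^*\OO_B(1)$. The second factor comes out of $\mathcal Ext_\pi$ by the projection formula. For the class in \eqref{eq:def-relative-twisted}, the $K$-theory of $R\pi_*R\mathcal Hom(\mathcal I,\mathcal I\otimes r^*\OO_{\mathcal U/B}(1))$ is $[\mathcal Ext^0]-[\mathcal Ext^1]$. The map $\mathcal Ext^0_\pi=\pi_*\mathcal Hom(\mathcal I,\mathcal I(1))\to R\pi_*r^*\OO_{\mathcal U/B}(1)$ is an isomorphism. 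This follows from $\mathcal Hom(\mathcal I,\mathcal I)=\OO$ together with $H^{>0}(\OO_P(1))=0$ fiberwise.

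Hence the $K$-class \eqref{eq:def-relative-twisted} equals $[\mathcal Ext^1_\pi(\mathcal I,\mathcal I\otimes r^*\OO_{\mathcal U/B}(1))]$, which is a genuine vector bundle. Twisting by $\rho^*\OO_B(1)$ gives the second isomorphism in \eqref{eq:ob-bundle}.

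\textbf{Step 5: the virtual class.} By Corollary~\ref{cor:smooth-moduli}, $M_n$ is smooth of dimension $2n+3$, which equals $\dim\Ext^1$. So the obstruction theory has locally free obstruction sheaf on a smooth space. The standard result \cite[Prop.~5.6]{BF} then gives $\vfc{M_n}=c_{\mathrm{top}}(\mathrm{Ob}_n)\cap[M_n]$.
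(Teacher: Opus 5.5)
Your proposal is correct and follows essentially the same route as the paper. Both arguments build the family Ext triangle for the divisor $\mathcal U_n\subset M_n\times X$, which has normal bundle $r^*N_{\mathcal U/(B\times X)}$, use Lemma~\ref{lem:surface-ext-vanishing} with cohomology and base change to get $\mathcal Ext^2_{p_M}(\mathcal F,\mathcal F)\cong\mathcal Ext^1_{\pi}(\mathcal I,\mathcal I\otimes r^*N_{\mathcal U/(B\times X)})$ as the obstruction sheaf of the $\tau^{[1,2]}$ theory, cancel the rank-$3$ term $\mathcal Hom_\pi$ against $\rho^*q_*\OO_{\mathcal U/B}(1)$, and finish with Behrend--Fantechi. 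The only cosmetic differences are that you write the global triangle explicitly where the paper says ``relative form of the sequence'', and you build the degree-zero comparison map from $\mathcal Hom(\mathcal I,\mathcal I)\cong\OO_{\mathcal U_n}$, in the opposite direction to the paper's multiplication-by-sections map.
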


\begin{proof}
By Theorem~\ref{thm:classification}, the scheme $M_n$ is the stable-sheaf moduli scheme.
Recall that
\[
j\colon \mathcal U_n=M_n\times_B\mathcal U\hookrightarrow M_n\times X
\]
is the natural closed immersion and that $\mathcal I$ is the universal ideal sheaf on $\mathcal U_n$.
The tautological family is
\[
\mathcal F:=j_*\mathcal I.
\]
It is flat over $M_n$, and its fiber at $m=(P,Z)$ is
\[
\mathcal F|_{\{m\}\times X}\cong \iota_{P*}\I_Z.
\]

Let $\mathbb{L}_{M_n}:=\mathbb{L}_{M_n/\C}$ denote the absolute cotangent complex, and set
\[
L_{M_n}^\bullet:=\tau^{[-1,0]}\mathbb{L}_{M_n}.
\]
A perfect obstruction theory is a morphism $\phi\colon E^\bullet\to L_{M_n}^\bullet$, with $E^\bullet$ perfect of amplitude $[-1,0]$, such that $h^0(\phi)$ is an isomorphism and $h^{-1}(\phi)$ is surjective.

Since the sheaves parametrized by $M_n$ have rank zero, the trace map does not split off their scalar endomorphisms: indeed, $\operatorname{tr}(\operatorname{id}_F)=\rk(F)=0$.
On the other hand,
\[
H^1(X,\OO_X)=H^2(X,\OO_X)=0,
\]
so the trace-free and full deformation and obstruction groups agree in degrees $1$ and $2$.
Moreover, Proposition~\ref{prop:tangent-obstruction-fiber} gives $\Ext^3_X(F,F)=0$.
In particular, Thomas' degree-three trace-free vanishing hypothesis is satisfied.
We therefore remove the degree-zero scalar term by truncating the derived endomorphism complex to degrees $1$ and $2$.
Let $p_M\colon M_n\times X\to M_n$ be the projection.
The component of the Atiyah class of $\mathcal F$ in the $M_n$-direction induces, by Thomas' construction \cite{Thomas} in the truncated rank-zero form of \cite[Theorem~2.3]{GSmod}, the perfect obstruction theory
\[
E^\bullet:=
\left(\tau^{[1,2]}R p_{M*}R\mathcal Hom(\mathcal F,\mathcal F)\right)^\dual[-1]
\xrightarrow{\ \phi\ } L_{M_n}^\bullet.
\]
Here $\tau^{[1,2]}$ retains precisely the cohomology sheaves in degrees $1$ and $2$, so $E^\bullet$ has amplitude $[-1,0]$.
At a closed point, the corresponding tangent and obstruction spaces are $\Ext^1_X(F,F)$ and $\Ext^2_X(F,F)$, respectively.
Since $M_n$ is smooth, $L_{M_n}^\bullet\simeq\Omega_{M_n}[0]$; nevertheless, $h^1((E^\bullet)^\dual)$ may be nonzero and is the obstruction sheaf of this perfect obstruction theory.

Apply Lemma~\ref{lem:ext-sequence} fiberwise to the family
\[
j\colon \mathcal U_n\hookrightarrow M_n\times X.
\]
The relative form of that sequence, together with cohomology and base change, gives the short exact sequence of vector bundles
\begin{equation}\label{eq:global-tangent-exact}
0\longrightarrow T_{M_n/B}\longrightarrow T_{M_n}\longrightarrow \rho^*T_B\longrightarrow 0
\end{equation}
and an isomorphism of obstruction sheaves
\[
\mathcal Ext^2_{p_M}(\mathcal F,\mathcal F)
\cong
\mathcal Ext^1_{\pi}\bigl(\mathcal I,\mathcal I\otimes r^*N_{\mathcal U/(B\times X)}\bigr).
\]
This is the first isomorphism in \eqref{eq:ob-bundle}.

Using \eqref{eq:normal-bundle} and the identity
\[
r^*q^*\OO_B(1)=\pi^*\rho^*\OO_B(1),
\]
we may factor the twist as
\[
\mathcal Ext^1_{\pi}\bigl(\mathcal I,\mathcal I\otimes r^*N_{\mathcal U/(B\times X)}\bigr)
\cong
\mathcal Ext^1_{\pi}\bigl(\mathcal I,\mathcal I\otimes r^*\OO_{\mathcal U/B}(1)\bigr)\otimes \rho^*\OO_B(1).
\]
Set
\[
\mathcal C:=
R\pi_*R\mathcal Hom\bigl(\mathcal I,
\mathcal I\otimes r^*\OO_{\mathcal U/B}(1)\bigr).
\]
The universal zero-dimensional subscheme is finite and flat over $M_n$.
It follows from its universal ideal sequence that $\mathcal I$ is $\pi$-flat; it is also $\pi$-perfect.
Since $\pi$ is smooth and projective of relative dimension two, $\mathcal C$ is a perfect complex on $M_n$, and its formation commutes with derived base change.
Thus, at $m=(P,Z)$,
\[
\mathcal C\otimes^{\mathbf L}k(m)
\simeq
R\Hom_P\bigl(\I_Z,\I_Z(1)\bigr).
\]
By Lemma~\ref{lem:surface-ext-vanishing},
\[
\Ext^2_P\bigl(\I_Z,\I_Z(1)\bigr)=0,
\qquad
\Hom_P\bigl(\I_Z,\I_Z(1)\bigr)
\cong H^0\bigl(P,\OO_P(1)\bigr),
\]
and
\[
\chi_P\bigl(\I_Z,\I_Z(1)\bigr)=3-2n.
\]
The groups in degrees greater than two vanish because $P$ is a smooth surface, and there are no negative Ext groups.
Consequently the fiber cohomology of $\mathcal C$ is concentrated in degrees zero and one, with constant dimensions
\[
\dim\Hom_P\bigl(\I_Z,\I_Z(1)\bigr)=3,
\qquad
\dim\Ext^1_P\bigl(\I_Z,\I_Z(1)\bigr)=2n.
\]
Cohomology and base change for perfect complexes therefore shows that
\[
\mathcal H^0(\mathcal C)
=\mathcal Hom_{\pi}\bigl(\mathcal I,
\mathcal I\otimes r^*\OO_{\mathcal U/B}(1)\bigr)
\]
and
\[
\mathcal H^1(\mathcal C)
=\mathcal Ext^1_{\pi}\bigl(\mathcal I,
\mathcal I\otimes r^*\OO_{\mathcal U/B}(1)\bigr)
\]
are locally free of ranks $3$ and $2n$, respectively, their formation commutes with base change, and all other cohomology sheaves of $\mathcal C$ vanish.
In particular, the second sheaf is an honest vector bundle of rank $2n$.

Multiplication by sections induces a morphism
\[
\rho^*q_*\OO_{\mathcal U/B}(1)
\cong \pi_*r^*\OO_{\mathcal U/B}(1)
\longrightarrow
\mathcal Hom_{\pi}\bigl(\mathcal I,
\mathcal I\otimes r^*\OO_{\mathcal U/B}(1)\bigr).
\]
Both sides are locally free of rank $3$, and the morphism is an isomorphism on every fiber by the preceding identification of Hom groups; hence it is an isomorphism.
Since $q$ is a $\PP^2$-bundle, its higher direct images of $\OO_{\mathcal U/B}(1)$ vanish.
Thus the nonzero degree-zero term $\mathcal H^0(\mathcal C)$ cancels canonically with the first term in Definition~\ref{def:twisted-tangent-relative}, and the resulting relative $K$-class is represented by the vector bundle
\[
\mathcal Ext^1_{\pi}\bigl(\mathcal I,\mathcal I\otimes r^*\OO_{\mathcal U/B}(1)\bigr).
\]
This proves the second isomorphism in \eqref{eq:ob-bundle}.

Finally, $M_n$ is smooth and the obstruction sheaf is a vector bundle.
For a perfect obstruction theory on a smooth scheme, the virtual class is the top Chern class of the obstruction bundle; see Behrend--Fantechi \cite{BF}.
Thus \eqref{eq:virtual-class-top-chern} holds.
\end{proof}

\begin{remark}\label{rem:tbundle}
The exact sequence \eqref{eq:global-tangent-exact} reflects the decomposition of the tangent space into deformations of the zero-dimensional subscheme inside a fixed plane and deformations of the support plane itself.
Fiberwise the quotient is
\[
H^0\bigl(\OO_P(1)\bigr)\cong T_{B,[P]}.
\]
\end{remark}

\section{The planar Donaldson--Thomas series}\label{sec:series}

Let $h=c_1(\OO_B(1))$ and define $\Gamma_n$ by \eqref{eq:intro-gamma}.
Because $\rho_*\vfc{M_n}\in A_3(B)$ and $A_3(B)\cong \Z[B]$, there is a unique scalar $\gamma_n$ such that
\[
\rho_*\vfc{M_n}=\gamma_n[B].
\]
Intersecting with $h^3$ shows that $\gamma_n=\Gamma_n$.
Thus the generating series of the numerical invariants and the homology-valued generating series contain the same information.

\begin{proposition}\label{prop:fiber-integral}
For every $n\ge 0$,
\begin{equation}\label{eq:gamma-fiber-integral}
\Gamma_n=\int_{(\PP^2)^{[n]}} c_{2n}\Bigl(\Ttw_{\PP^2}^{[n]}\bigl(\OO_{\PP^2}(1)\bigr)\Bigr).
\end{equation}
Equivalently,
\[
\rho_*\vfc{M_n}=\Gamma_n[B].
\]
\end{proposition}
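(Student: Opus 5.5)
The plan is to compute $\Gamma_n$ by restricting the virtual class to a single fiber of $\rho$. Since $h^3$ is the class of a point $[P_0]\in B$, and $\rho\colon M_n\to B$ is flat (indeed Zariski-locally a product $V\times(\PP^2)^{[n]}$ by Corollary~\ref{cor:smooth-moduli}), we have $\rho^*(h^3)\cap[M_n]=[\rho^{-1}(P_0)]=[P_0^{[n]}]$. Using Theorem~\ref{thm:obstruction-bundle}, $\vfc{M_n}=c_{2n}(\mathrm{Ob}_n)\cap[M_n]$. Since Chern classes commute with flat pullback and capping, this gives
\[
\Gamma_n=\int_{M_n}c_{2n}(\mathrm{Ob}_n)\,\rho^*(h^3)=\int_{P_0^{[n]}}c_{2n}\bigl(\mathrm{Ob}_n|_{P_0^{[n]}}\bigr).
\]

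The next step is to identify $\mathrm{Ob}_n|_{P_0^{[n]}}$. By \eqref{eq:ob-bundle}, $\mathrm{Ob}_n\cong\mathcal Ext^1_\pi(\mathcal I,\mathcal I\otimes r^*\OO_{\mathcal U/B}(1))\otimes\rho^*\OO_B(1)$. The factor $\rho^*\OO_B(1)$ restricts to a trivial line bundle on the fiber over $P_0$, so it drops out. For the remaining factor I will use the base-change property established in the proof of Theorem~\ref{thm:obstruction-bundle}: the complex $\mathcal C$ commutes with derived base change and has locally free cohomology. Restricting along $P_0^{[n]}\hookrightarrow M_n$, the Cartesian square pulls back to $P_0\times P_0^{[n]}\to P_0^{[n]}$. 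Under this pullback, $\mathcal I$ restricts to the universal ideal $\mathcal J$ and $\OO_{\mathcal U/B}(1)$ restricts to $\OO_{\PP^2}(1)$. Hence
\[
\mathcal Ext^1_\pi(\mathcal I,\mathcal I\otimes r^*\OO_{\mathcal U/B}(1))|_{P_0^{[n]}}\cong \mathcal Ext^1_p(\mathcal J,\mathcal J(1)).
\]

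By the same cancellation argument as in Theorem~\ref{thm:obstruction-bundle}, this bundle represents the class $\Ttw_{\PP^2}^{[n]}(\OO_{\PP^2}(1))$ of Definition~\ref{def:twisted-tangent-relative}. Concretely, the degree-zero term $p_*\mathcal Hom(\mathcal J,\mathcal J(1))\cong H^0(\OO(1))\otimes\OO$ cancels against $\chi(\OO(1))[\OO]$, using the vanishing of $\Ext^2_P$ from Lemma~\ref{lem:surface-ext-vanishing}. Taking $c_{2n}$ then gives \eqref{eq:gamma-fiber-integral}.

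The second assertion, $\rho_*\vfc{M_n}=\Gamma_n[B]$, follows from the discussion just before the proposition: $A_3(B)=\Z[B]$, and pairing with $h^3$ recovers the coefficient.

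The main obstacle I expect is the base-change step, namely justifying that $\mathcal Ext^1_\pi$ commutes with restriction to a fiber. This is already supplied by the constant-rank argument in Theorem~\ref{thm:obstruction-bundle}, so it should be routine. A secondary point is the choice of normalization of $\OO_{\mathcal U/B}(1)$ (its twist by pullbacks from $B$), which could change the restriction only by a trivial bundle and so does not affect the result.
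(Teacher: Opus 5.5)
Your proposal is correct and follows the same route as the paper. Both arguments cut down to the fiber $\rho^{-1}([P])\cong(\PP^2)^{[n]}$ over a point of $B$, observe that $\rho^*\OO_B(1)$ is trivial there, and identify the restricted obstruction bundle with $\Ttw_{\PP^2}^{[n]}(\OO_{\PP^2}(1))$. You spell out steps the paper leaves implicit: flat pullback of the point class, derived base change for $\mathcal C$, and the fiberwise cancellation of the degree-zero term.
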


\begin{proof}
By Theorem~\ref{thm:obstruction-bundle},
\[
\vfc{M_n}=c_{2n}(\mathrm{Ob}_n)\cap [M_n].
\]
The fiber of $\rho$ over a point $[P]\in B$ is canonically $P^{[n]}\cong (\PP^2)^{[n]}$.
Since $\rho^*\OO_B(1)$ restricts trivially to the fiber, the restriction of the obstruction bundle is
\[
\mathrm{Ob}_n|_{\rho^{-1}([P])}
\cong
\Ttw_{P}^{[n]}\bigl(\OO_P(1)\bigr)
\cong
\Ttw_{\PP^2}^{[n]}\bigl(\OO_{\PP^2}(1)\bigr).
\]
Therefore the coefficient of $[B]$ in $\rho_*\vfc{M_n}$ is exactly the fiber integral in \eqref{eq:gamma-fiber-integral}.
\end{proof}

The closed form now follows from the Carlsson--Okounkov product formula.

\begin{theorem}\label{thm:planar-series}
The planar series is
\[
\sum_{n\ge 0}\Gamma_nq^n=\prod_{m\ge 1}(1-q^m)^{-7}.
\]
Equivalently,
\[
\sum_{n\ge 0}\rho_*\vfc{M_n}\,q^n
=
\left(\prod_{m\ge 1}(1-q^m)^{-7}\right)[B].
\]
\end{theorem}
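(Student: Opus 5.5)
Proposition~\ref{prop:fiber-integral} has already reduced $\Gamma_n$ to a fiber integral on $(\PP^2)^{[n]}$, and $\rho_*\vfc{M_n}=\Gamma_n[B]$. So the plan is to evaluate
\[
\sum_{n\ge0}q^n\int_{(\PP^2)^{[n]}}c_{2n}\bigl(\Ttw_{\PP^2}^{[n]}(\OO_{\PP^2}(1))\bigr)
\]
with the Carlsson--Okounkov formula \cite{CO}. That formula says that for a smooth projective surface $S$ and a line bundle $L$ on $S$,
\[
\sum_{n\ge 0}q^n\int_{S^{[n]}}e\bigl(\Ttw_S^{[n]}(L)\bigr)
=\prod_{m\ge1}(1-q^m)^{\langle c_1(L)-K_S,\,c_1(L)\rangle-e(S)}.
\]
Here $\langle\cdot,\cdot\rangle$ is the intersection pairing. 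Before applying it, I will check that our Definition~\ref{def:twisted-tangent-relative} agrees with Carlsson--Okounkov's class $E=\chi(\OO,L)-\chi(\mathcal J,\mathcal J\otimes L)$, not the dual or sign-flipped convention. For $P\cong\PP^2$ this holds: the proof of Theorem~\ref{thm:obstruction-bundle} shows the class is represented by the rank-$2n$ bundle $\mathcal Ext^1_p(\mathcal J,\mathcal J(1))$, so its Euler class is $c_{2n}$. If the two conventions differ by a dual, the top Chern class changes only by $(-1)^{2n}=1$, so the integral is unaffected.

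The remaining step is the intersection computation. Take $S=\PP^2$ and $L=\OO(1)$, and write $\ell$ for the line class. Then $c_1(L)=\ell$, $K_S=-3\ell$ and $e(S)=3$. So $\langle c_1(L)-K_S,c_1(L)\rangle=\langle 4\ell,\ell\rangle=4$, and the exponent is $4-3=1$. That gives $\prod(1-q^m)^{+1}$, not the claimed $(1-q^m)^{-7}$, so I must recheck the normalization of the exponent before trusting either answer.

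Which variant of the exponent reproduces $-7$? The form $-\langle K_S-c_1(L),c_1(L)\rangle-e(S)$ gives $-4-3=-7$. Indeed Carlsson--Okounkov state the result as $\prod(1-q^m)^{\langle K_S-L,L\rangle-e(S)}$, with the sign inside the pairing reversed; this gives $\langle -4\ell,\ell\rangle-3=-7$. The same normalization also gives the exponent $-(c_2(X)\cdot D+D^3)$ in Theorem~\ref{thm:D}: adjunction gives $K_D=(K_X+D)|_D$ with $L=N=D|_D$, and Noether's formula gives $e(D)$. So the key step is to pin down the precise form of the Carlsson--Okounkov exponent. I will do this using the sanity check $L=\OO_S$, where $\Ttw_S^{[n]}(\OO_S)=T_{S^{[n]}}$ and the series must be Göttsche's $\sum e(S^{[n]})q^n=\prod(1-q^m)^{-e(S)}$. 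Both candidate forms pass this check, so it does not distinguish them. I will therefore also check $n=1$ directly. On $S^{[1]}=S$ the bundle is $T_S\otimes L$, and
\[
c_2(T_S\otimes L)=c_2(S)+c_1(S)c_1(L)+c_1(L)^2=3+3+1=7.
\]
This matches the coefficient of $q$ in $\prod(1-q^m)^{-7}$, and confirms the exponent $\langle K_S-L,L\rangle-e(S)=-7$.

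With the exponent fixed, substituting into Proposition~\ref{prop:fiber-integral} gives $\sum_n\Gamma_nq^n=\prod_{m\ge1}(1-q^m)^{-7}$. Multiplying by $[B]$ gives the homology-valued form, since $\rho_*\vfc{M_n}=\Gamma_n[B]$. The main obstacle is the convention matching with \cite{CO}, namely the sign of $L$ and whether the class is dualized. The $n=1$ computation above is what resolves it.
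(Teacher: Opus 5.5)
Your proposal is correct and follows the paper's proof. Proposition~\ref{prop:fiber-integral} reduces the statement to $\int_{(\PP^2)^{[n]}}c_{2n}\bigl(\Ttw_{\PP^2}^{[n]}(\OO(1))\bigr)$, and \cite[Cor.~1]{CO} with exponent $\langle K_S-L,L\rangle-e(S)=-4-3=-7$ gives the product. The exponent form you quote first, which gives $+1$, is wrong and should be deleted; your $n=1$ check $c_2\bigl(T_{\PP^2}\otimes\OO(1)\bigr)=7$ is a sound safeguard, since for a product of the form $\prod_m(1-q^m)^{a}$ the coefficient of $q$ equals $-a$ and so determines the exponent.
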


\begin{proof}
Proposition~\ref{prop:fiber-integral} reduces the problem to the surface integral
\[
\int_{(\PP^2)^{[n]}} c_{2n}\Bigl(\Ttw_{\PP^2}^{[n]}\bigl(\OO_{\PP^2}(1)\bigr)\Bigr).
\]
By Carlsson--Okounkov's formula \cite[Cor.~1]{CO}, for any smooth projective surface $S$ and line bundle $L$ one has
\[
\sum_{n\ge 0} q^n\int_{S^{[n]}} e\bigl(\Ttw_S^{[n]}(L)\bigr)
=
\prod_{m\ge 1}(1-q^m)^{(L,K_S-L)-e(S)}.
\]
Applying this with $S=\PP^2$ and $L=\OO_{\PP^2}(1)$ gives
\[
(L,K_S-L)-e(S)=H\cdot (-4H)-3=-7.
\]
Since $\Ttw_{\PP^2}^{[n]}(\OO(1))$ is an honest rank-$2n$ vector bundle, its Euler class is its top Chern class, and the formula follows.
\end{proof}

\begin{remark}\label{rem:eta}
Writing $q=e^{2\pi i\tau}$, the product in Theorem~\ref{thm:planar-series} is $q^{7/24}\eta(\tau)^{-7}$.
The first coefficients are
\[
1,\ 7,\ 35,\ 140,\ 490,\ 1547,\ldots.
\]
These numbers have a direct enumerative interpretation.
The insertion $\rho^*(h^3)$ fixes the support plane $P\subset \PP^3$, so $\Gamma_n$ is the degree of the resulting zero-dimensional virtual class of sheaves $i_{P*}\I_Z$ with $Z\subset P$ of length $n$.
Equivalently,
\[
\Gamma_n
=
\int_{P^{[n]}}c_{2n}\Bigl(\Ttw_P^{[n]}\bigl(\OO_P(1)\bigr)\Bigr),
\]
the Euler number of the rank-$2n$ obstruction bundle on $P^{[n]}$.
Thus $\Gamma_n$ is a virtual count, rather than the cardinality of a finite set of subschemes.

Combinatorially, $\Gamma_n$ is the number $p_7(n)$ of seven-colored partitions of $n$, or equivalently of ordered seven-tuples of ordinary partitions whose total size is $n$.
For example, $p_7(1)=7$ and
\[
p_7(2)=7+\binom{8}{2}=35.
\]
The exponent $7$ is itself the geometric characteristic number
\[
e(\PP^2)+H\cdot\bigl(H-K_{\PP^2}\bigr)
=3+4
=\int_{\PP^2}c_2\bigl(T_{\PP^2}\otimes\OO_{\PP^2}(1)\bigr).
\]
By G\"ottsche's formula \cite{Gottsche}, the same coefficients are also the Euler characteristics $e(S^{[n]})$ for any smooth projective surface $S$ with $e(S)=7$; they are not the ordinary Euler characteristics of $(\PP^2)^{[n]}$, whose generating series has exponent $3$.
Thus the natural point-inserted two-dimensional Donaldson--Thomas series on $\PP^3$ is modular, in sharp contrast with the degree-zero point series $M(-q)^{-20}$.
\end{remark}

\begin{remark}\label{rem:linear-systems}
Theorem~\ref{thm:planar-series} is the simplest moving-divisor instance of the linear-system framework studied by Gholampour and Sheshmani \cite{GSlinear}.
In the planar case the base of the support family is $B=(\PP^3)^\dual$, and the point insertion $h^3$ exactly saturates the virtual dimension.
\end{remark}

\section{The planar locus inside $\Hilb^n(\PP^3)$}\label{sec:locus}

Let $X^{[n]}:=\Hilb^n(\PP^3)$, and let $X^{[n]}_{\mathrm{dist}}\subset X^{[n]}$ be the open subscheme parametrizing reduced length-$n$ subschemes, or equivalently unordered collections of $n$ distinct points.
If
\[
\Delta:=\bigcup_{1\leq i<j\leq n}\Delta_{ij}\subset X^n
\]
denotes the big diagonal, then there is a natural isomorphism
\begin{equation}\label{eq:distinct-point-configuration}
X^{[n]}_{\mathrm{dist}}
\cong
\bigl(X^n\setminus\Delta\bigr)/\mathfrak S_n,
\end{equation}
where $\mathfrak S_n$ acts by permuting the ordered points; compare \cite[\S~1.1]{Nakajima}.
Since $X=\PP^3$ is smooth and irreducible and the action on $X^n\setminus\Delta$ is free, \eqref{eq:distinct-point-configuration} shows that $X^{[n]}_{\mathrm{dist}}$ is smooth and irreducible of dimension $3n$.

We define the \emph{principal component}, also called the \emph{smoothable component}, by
\begin{equation}\label{eq:principal-component}
X^{[n]}_{\mathrm{prin}}
:=
\left(\overline{X^{[n]}_{\mathrm{dist}}}\right)_{\mathrm{red}}
\subset X^{[n]},
\end{equation}
where the bar denotes closure in $X^{[n]}$ and the closure is endowed with its induced reduced closed-subscheme structure.
At a point $[Z]\in X^{[n]}_{\mathrm{dist}}$, with $Z=\{x_1,\ldots,x_n\}$, one has
\[
T_{[Z]}X^{[n]}
\cong
\Hom_X(\I_Z,\OO_Z)
\cong
\bigoplus_{i=1}^n T_{x_i}X.
\]
Thus every point of $X^{[n]}_{\mathrm{dist}}$ is a smooth point of $X^{[n]}$ of local dimension $3n$, and the scheme $X^{[n]}_{\mathrm{prin}}$ defined in \eqref{eq:principal-component} is the unique irreducible component of $X^{[n]}$ containing the distinct-point locus.  In particular,
\[
\dim X^{[n]}_{\mathrm{prin}}=3n.
\]
Its closed points are precisely the length-$n$ subschemes which occur as flat limits in $X$ of $n$ distinct points.  The component itself need not be smooth.

For the moment, let
\[
X^{[n]}_{\mathrm{pl}}(\C)
:=
\left\{
[Z]\in X^{[n]}(\C):
\begin{array}{l}
Z\subset P \text{ scheme-theoretically}\\
\text{for some plane }P\subset \PP^3
\end{array}
\right\}
\]
be the set of planar closed points.
Recall from Section~\ref{sec:obstruction} that $\mathcal I$ is the ideal sheaf of the universal length-$n$ subscheme
\[
\mathcal Z_{M_n}\subset \mathcal U_n.
\]
Via the closed immersion $j\colon \mathcal U_n\hookrightarrow M_n\times X$, we regard $\mathcal Z_{M_n}$ as a closed subscheme of $M_n\times X$.
It is finite and flat of degree $n$ over $M_n$.
The universal property of the Hilbert scheme therefore gives a morphism
\begin{equation}\label{eq:incidence-morphism}
r_n\colon M_n\longrightarrow X^{[n]},\qquad r_n(P,Z)=[Z].
\end{equation}
The morphism $r_n$ in \eqref{eq:incidence-morphism} is induced by the universal family; the equality $r_n(P,Z)=[Z]$ records its action on geometric points.
Its image on geometric points is precisely $X^{[n]}_{\mathrm{pl}}(\C)$.

Let
\[
\mathcal Z\subset X^{[n]}\times X
\]
be the universal zero-dimensional subscheme.
Let $\operatorname{pr}_X\colon X^{[n]}\times X\to X$ be the second projection, and write
\[
p\colon \mathcal Z\longrightarrow X^{[n]},
\qquad
q:=\operatorname{pr}_X|_{\mathcal Z}\colon \mathcal Z\longrightarrow X
\]
for the induced projections.
Since $p$ is finite and flat of degree $n$ and $q^*\OO_X(1)$ is invertible on $\mathcal Z$, the tautological sheaf
\[
\OO_X(1)^{[n]}:=p_*q^*\OO_X(1)
\]
is locally free of rank $n$ on $X^{[n]}$.
Indeed, for every geometric point $[Z]\in X^{[n]}$, finite flat base change identifies its fiber with
\[
\OO_X(1)^{[n]}|_{[Z]}
\cong H^0\bigl(Z,\OO_Z(1)\bigr),
\]
which has dimension $n$ because $Z$ has length $n$ and $\OO_Z(1)$ is invertible.

The usual evaluation morphism
\[
H^0\bigl(X,\OO_X(1)\bigr)\otimes\OO_X\longrightarrow\OO_X(1)
\]
pulls back along $q$ to a morphism on $\mathcal Z$.
By the adjunction $p^*\dashv p_*$, it induces the bundle map
\begin{equation}\label{eq:eval-map}
\mathrm{ev}\colon H^0\bigl(\OO_X(1)\bigr)\otimes \OO_{X^{[n]}}\longrightarrow \OO_X(1)^{[n]}.
\end{equation}
Its fiber at $[Z]$ is the restriction map
\[
H^0\bigl(X,\OO_X(1)\bigr)\longrightarrow H^0\bigl(Z,\OO_Z(1)\bigr).
\]
Let $D_3(\mathrm{ev})\subset X^{[n]}$ denote the rank-$\leq 3$ degeneracy subscheme of \eqref{eq:eval-map}; it is locally defined by the $4\times4$ minors of a matrix representing $\mathrm{ev}$.

\begin{proposition}\label{prop:planar-locus-degeneracy}
The closed points of $D_3(\mathrm{ev})$ are precisely the planar points $X^{[n]}_{\mathrm{pl}}(\C)$.
In particular, the planar set is Zariski closed.
Endow the corresponding closed subset with its induced reduced closed-subscheme structure and denote the resulting scheme by $X^{[n]}_{\mathrm{pl}}$.
Then
\begin{equation}\label{eq:planar-locus-reduced-degeneracy}
X^{[n]}_{\mathrm{pl}}=\bigl(D_3(\mathrm{ev})\bigr)_{\mathrm{red}}.
\end{equation}
Equivalently, a length-$n$ subscheme $Z\subset \PP^3$ is planar if and only if the restriction map
\[
H^0\bigl(X,\OO_X(1)\bigr)\longrightarrow H^0\bigl(Z,\OO_Z(1)\bigr)
\]
has nonzero kernel.
For $n\leq 3$, every length-$n$ subscheme is planar, and
\[
X^{[n]}_{\mathrm{pl}}=X^{[n]}.
\]
For every $n$, the morphism $r_n$ in \eqref{eq:incidence-morphism} factors through $X^{[n]}_{\mathrm{pl}}$.
For $n\geq 4$, the planar locus is irreducible of dimension $2n+3$ and has codimension $n-3$ inside $X^{[n]}_{\mathrm{prin}}$.
For $n\geq 4$, moreover, the induced morphism
\[
r_n\colon M_n\longrightarrow X^{[n]}_{\mathrm{pl}}
\]
is birational and is an isomorphism over the open locus of subschemes spanning a unique plane.
\end{proposition}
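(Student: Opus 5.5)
The proof starts from a pointwise criterion: a length-$n$ subscheme $Z\subset\PP^3$ lies scheme-theoretically in the plane $P=V(\ell)$ if and only if $\ell\in\I_Z$. This holds exactly when the linear form $\ell$ restricts to zero in $H^0(Z,\OO_Z(1))$, i.e.\ when $\ell$ lies in the kernel of the fiber of $\mathrm{ev}$ at $[Z]$. Since $H^0(\OO_X(1))$ is $4$-dimensional, this kernel is nonzero exactly when $\rk(\mathrm{ev}_{[Z]})\le 3$. That is exactly the condition that all $4\times4$ minors vanish at $[Z]$. Hence the closed points of $D_3(\mathrm{ev})$ are the planar points, the planar set is closed, and \eqref{eq:planar-locus-reduced-degeneracy} holds by definition of the reduced structure. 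For $n\le 3$ the target $H^0(\OO_Z(1))$ has dimension $n\le 3$, so the kernel is automatically nonzero and $D_3(\mathrm{ev})=X^{[n]}$ set-theoretically. Since $X^{[n]}$ is reduced (indeed smooth) for $n\le 3$, we get $X^{[n]}_{\mathrm{pl}}=X^{[n]}$.

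To show that $r_n$ factors through $X^{[n]}_{\mathrm{pl}}$, I will first show that $r_n$ lands in $D_3(\mathrm{ev})$ scheme-theoretically. Pulling $\mathrm{ev}$ back along $r_n$ gives $H^0(\OO_X(1))\otimes\OO_{M_n}\to r_n^*\OO_X(1)^{[n]}$. This map kills the rank-one subbundle $\rho^*\OO_B(-1)\subset H^0(\OO_X(1))\otimes\OO_B$ given by the tautological linear form, because that form vanishes on $\mathcal U_n\supset\mathcal Z_{M_n}$. So the pulled-back map factors through a rank-$3$ quotient bundle, and all $4\times4$ minors vanish. To pass to the reduced structure it is enough to know that $M_n$ is reduced, which Corollary~\ref{cor:smooth-moduli} provides. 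A morphism from a reduced scheme with image in a closed set factors through the reduced induced structure on that set.

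For $n\ge4$, note that $M_n$ is proper and $r_n$ is surjective onto the planar set. Corollary~\ref{cor:smooth-moduli} gives that $M_n$ is irreducible, so $X^{[n]}_{\mathrm{pl}}$ is irreducible. Let $W\subset X^{[n]}_{\mathrm{pl}}$ be the open locus where the kernel of $\mathrm{ev}$ is exactly one-dimensional, i.e.\ where $\mathrm{ev}$ has rank exactly $3$; $W$ is nonempty, for example it contains $n$ general points of a plane. Over $W$, I will construct an inverse to $r_n$. The kernel of $\mathrm{ev}|_W$ is a line subbundle (on the reduced scheme $W$ the rank is constant, so the kernel is locally free). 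It therefore gives a morphism $W\to B$. By construction the universal subscheme over $W$ lies in the pulled-back universal plane, and the universal property of $\Hilb^n(\mathcal U/B)$ then gives $W\to M_n$. Both composites are the identity on functors of points, so $r_n$ is an isomorphism over $W$. This gives birationality, and $\dim X^{[n]}_{\mathrm{pl}}=\dim M_n=2n+3$.

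Finally, the planar locus is contained in the principal component, because every subscheme of $\PP^2$ is smoothable (since $(\PP^2)^{[n]}$ is irreducible by Fogarty) and is therefore a limit of distinct points. Its codimension in $X^{[n]}_{\mathrm{prin}}$ is then $3n-(2n+3)=n-3$. The main obstacle is the isomorphism over $W$. The cleanest route is to note that the rank-$3$ locus, taken with its degeneracy scheme structure, supports a kernel line subbundle scheme-theoretically on the open part where the rank is exactly $3$. If working with reduced structures makes the inverse delicate, I would instead argue that $r_n$ is injective on points over $W$ and check that its differential is injective there. Then $r_n$ is a bijective unramified proper map onto the normal locus $W$, and Zariski's main theorem finishes the argument.
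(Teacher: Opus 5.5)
Your proposal is correct and follows essentially the same route as the paper. The shared steps are:
\begin{itemize}
\item the kernel criterion $\ker(\mathrm{ev}_{[Z]})=H^0(X,\I_Z(1))$;
\item the dimension count together with smoothness of $X^{[n]}$ for $n\le 3$;
\item factorization of $r_n$ through the reduced planar locus because $M_n$ is reduced;
\item irreducibility from surjectivity of $r_n$;
\item an inverse over the rank-exactly-$3$ locus, built from the kernel line subbundle;
\item smoothability inside the plane for the codimension count.
\end{itemize}
Your additions usefully spell out steps the paper states tersely: the scheme-theoretic vanishing of the $4\times4$ minors on $M_n$ via the tautological line $\rho^*\OO_B(-1)$, and local freeness of the kernel from constant rank on the reduced $W$. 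The Zariski-main-theorem fallback is unnecessary, and as phrased it assumes $W$ is normal, which you have not established.
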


\begin{proof}
Let $[Z]\in X^{[n]}$ be a geometric point, and let $\I_Z\subset\OO_X$ be its ideal sheaf.
Twisting the ideal-sheaf sequence of $Z$ by $\OO_X(1)$ gives
\begin{equation}\label{eq:planar-ideal-sequence}
0\longrightarrow \I_Z(1)
\longrightarrow \OO_X(1)
\longrightarrow \OO_Z(1)
\longrightarrow 0.
\end{equation}
Taking global sections in \eqref{eq:planar-ideal-sequence} and using the fiber description of \eqref{eq:eval-map} gives
\[
\ker\bigl(\mathrm{ev}_{[Z]}\bigr)=H^0\bigl(X,\I_Z(1)\bigr).
\]
A nonzero element $\ell\in H^0(X,\I_Z(1))$ is a linear form which vanishes on $Z$ scheme-theoretically.
It defines a plane $P_\ell:=V(\ell)\subset X$, and the vanishing condition is equivalent to
\[
\I_{P_\ell}\subseteq \I_Z,
\]
which is precisely the scheme-theoretic containment $Z\subseteq P_\ell$.
Conversely, if $Z\subseteq P$ for a plane $P=V(\ell)$, then $\I_P\subseteq\I_Z$, and therefore $0\neq\ell\in H^0(X,\I_Z(1))$.
Consequently,
\[
Z\text{ is planar}
\quad\Longleftrightarrow\quad
\ker\bigl(\mathrm{ev}_{[Z]}\bigr)\neq 0.
\]

Since $\dim H^0(X,\OO_X(1))=4$, this is equivalent to
\[
\operatorname{rank}\bigl(\mathrm{ev}_{[Z]}\bigr)\leq 3.
\]
By the definition of the degeneracy locus, its closed points are therefore precisely $X^{[n]}_{\mathrm{pl}}(\C)$.
Taking the induced reduced closed-subscheme structures proves \eqref{eq:planar-locus-reduced-degeneracy}.

Suppose that $n\leq 3$.
For a reduced subscheme consisting of $n$ distinct points, the elementary geometric reason for planarity is that its linear span has projective dimension at most $n-1\leq 2$, and is therefore contained in a plane in $\PP^3$.
The same conclusion holds for an arbitrary, possibly nonreduced, length-$n$ subscheme.
Indeed,
\[
\dim H^0\bigl(Z,\OO_Z(1)\bigr)=n,
\]
whereas $\dim H^0(X,\OO_X(1))=4$.
Consequently,
\[
\dim\ker\bigl(\mathrm{ev}_{[Z]}\bigr)\geq 4-n\geq 1,
\]
and the preceding argument shows that $Z$ is contained scheme-theoretically in the plane defined by any nonzero element of this kernel.

Scheme-theoretically, the target of \eqref{eq:eval-map} has rank $n\leq3$, so the rank-$\leq3$ condition is automatic and
\[
D_3(\mathrm{ev})=X^{[n]}.
\]
Moreover, the Hilbert scheme of at most three points on a smooth variety is smooth \cite[\S~7.2]{FGlocal}; in particular, $X^{[n]}$ is reduced.
It follows from \eqref{eq:planar-locus-reduced-degeneracy} that
\[
X^{[n]}_{\mathrm{pl}}
=\bigl(D_3(\mathrm{ev})\bigr)_{\mathrm{red}}
=X^{[n]}.
\]

The set-theoretic image of $r_n$ is the planar subset.
Since $M_n$ is smooth, and hence reduced, the morphism $r_n$ factors through the reduced closed subscheme $X^{[n]}_{\mathrm{pl}}$.
Every planar subscheme admits a containing plane, so the induced morphism is surjective on geometric points.
Its image is therefore dense, and since $M_n$ is irreducible, it follows that $X^{[n]}_{\mathrm{pl}}$ is irreducible.

Assume now that $n\geq 4$, and let $U\subset X^{[n]}_{\mathrm{pl}}$ be the open locus on which $\mathrm{ev}$ has rank exactly $3$.
This locus is nonempty: one may take $n$ distinct points on a plane which are not all contained in a line.
On $U$, the kernel of $\mathrm{ev}$ is a line subbundle of
\[
H^0\bigl(X,\OO_X(1)\bigr)\otimes\OO_U.
\]
This line subbundle determines algebraically the unique plane containing the universal subscheme over $U$.
It therefore defines an inverse to $r_n$ over $U$, so
\[
r_n^{-1}(U)\xrightarrow{\ \cong\ }U.
\]
Thus $r_n$ is birational, and
\[
\dim X^{[n]}_{\mathrm{pl}}=\dim M_n=2n+3.
\]

Finally, every planar length-$n$ subscheme $Z\subset P$ is smoothable inside $P$.
Indeed, by Fogarty's theorem, $P^{[n]}$ is irreducible and its open locus of $n$ distinct points is dense.
Thus every point of $X^{[n]}_{\mathrm{pl}}$ belongs to $X^{[n]}_{\mathrm{prin}}$.
Since $X^{[n]}_{\mathrm{prin}}$ has dimension $3n$, the codimension of the planar locus is
\[
3n-(2n+3)=n-3.
\]
\end{proof}

\begin{remark}\label{rem:jrs}
The global geometry of $\Hilb^n(\PP^3)$ is much subtler than in the surface case; see, for instance, the recent work of Jelisiejew, Ramkumar, and Sammartano \cite{JRS}.
Proposition~\ref{prop:planar-locus-degeneracy} identifies the planar locus with the reduction of the natural determinantal locus $D_3(\mathrm{ev})$, which for $n\geq 4$ is locally defined by the $4\times4$ minors of a matrix representing the evaluation map \eqref{eq:eval-map}.
\end{remark}

\section{Divisor-supported sheaves on a Fano threefold}\label{sec:divisors}

We now formulate the analogue of the planar story for sheaves supported on a smooth divisor.
Throughout this section $X$ is a smooth projective Fano threefold, $\iota\colon D\hookrightarrow X$ is a smooth divisor, and
\[
N:=N_{D/X}\cong \OO_D(D)
\]
is the normal bundle.
Fix a line bundle $L\in \Pic(D)$.
For $Z\in D^{[n]}$, set
\[
E_Z:=\I_Z\otimes L,
\qquad
F_Z:=\iota_*E_Z.
\]
The fixed-determinant moduli space of rank-one torsion-free sheaves $E$ satisfying $E^{\dual\dual}\cong L$ and $\operatorname{length}(L/E)=n$ is $D^{[n]}$.
The ambient threefold deformation theory is controlled by the following analogue of Proposition~\ref{prop:tangent-obstruction-fiber}.

\begin{proposition}\label{prop:general-ext}
For every $Z\in D^{[n]}$ there are exact sequences of trace-free Ext groups
\begin{equation}\label{eq:general-tangent}
0\longrightarrow \Ext^1_D(E_Z,E_Z)_0
\longrightarrow \Ext^1_X(F_Z,F_Z)_0
\longrightarrow H^0(D,N)
\longrightarrow 0
\end{equation}
and
\begin{equation}\label{eq:general-obstruction}
\Ext^2_X(F_Z,F_Z)_0\cong \Ext^1_D(E_Z,E_Z\otimes N).
\end{equation}
Moreover,
\[
\Ext^2_D(E_Z,E_Z\otimes N)=0,
\qquad
\Hom_D(E_Z,E_Z\otimes N)\cong H^0(D,N).
\]
\end{proposition}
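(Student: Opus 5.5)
The plan is to specialize Lemma~\ref{lem:ext-sequence} to $Y=X$, $S=D$ and $E=G=E_Z$, following Proposition~\ref{prop:tangent-obstruction-fiber}, and then to remove the trace part. I would first do the surface computations, the analogue of Lemma~\ref{lem:surface-ext-vanishing}. Since $E_Z$ is rank one and torsion-free on the smooth surface $D$, we have $\mathcal Hom_D(E_Z,E_Z)\cong\OO_D$. Hence $\Hom_D(E_Z,E_Z\otimes N)\cong H^0(D,N)$ and $\Hom_D(E_Z,E_Z\otimes K_D)\cong H^0(D,K_D)$. For the vanishing, Serre duality gives
\[
\Ext^2_D(E_Z,E_Z\otimes N)\cong\Hom_D(E_Z,E_Z\otimes K_D\otimes N^\dual)^\dual\cong H^0(D,K_D\otimes N^\dual)^\dual .
\]
By adjunction $K_D\cong (K_X\otimes\OO_X(D))|_D$, so $K_D\otimes N^\dual\cong K_X|_D$. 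This is anti-ample on $D$ because $X$ is Fano, so it has no sections. This gives the claimed vanishing, and it is the only place the Fano hypothesis enters.

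Next comes the trace-free splitting. On $D$ I would use the trace $R\mathcal Hom_D(E_Z,E_Z)\to\OO_D$ together with the identity section $\OO_D\to R\mathcal Hom_D(E_Z,E_Z)$. Their composite is multiplication by $\rk E_Z=1$, so this splits $R\Hom_D(E_Z,E_Z)=R\Gamma(\OO_D)\oplus R\Hom_D(E_Z,E_Z)_0$. The same construction splits the twisted complex: $R\Hom_D(E_Z,E_Z\otimes N)=R\Gamma(N)\oplus R\Hom_D(E_Z,E_Z\otimes N)_0$. Serre duality identifies $\Ext^2_D(E_Z,E_Z)\cong H^0(K_D)^\dual\cong H^2(\OO_D)$, so $\Ext^2_D(E_Z,E_Z)_0=0$ and $\Hom_D(E_Z,E_Z)_0=0$. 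Likewise $\Hom_D(E_Z,E_Z\otimes N)_0=0$, since the whole Hom group is $H^0(N)$.

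The step I expect to be the main obstacle is making the ambient trace-free groups compatible with the distinguished triangle of Lemma~\ref{lem:ext-sequence}. I would define $R\Hom_X(F_Z,F_Z)_0$ as the cone of the map from the corresponding complex for $\iota_*\OO_D$, or more precisely from the contribution $R\Gamma(\OO_D)\to R\Hom_X(F_Z,F_Z)$ induced by the identity section. Functoriality of the triangle in $E$ and $G$ would then give a morphism of triangles, and hence a triangle of trace-free pieces
\[
R\Hom_D(E_Z,E_Z)_0\to R\Hom_X(F_Z,F_Z)_0\to R\Hom_D(E_Z,E_Z\otimes N)[-1]\to .
\]
In this triangle I would keep the full $N$-twisted term, so that $H^0(D,N)$ survives in the tangent sequence as in the planar case. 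Here I need to check that the scalar part $R\Gamma(\OO_D)$ is what is removed. Because $H^1(\OO_X)=H^2(\OO_X)=0$, this is consistent with Thomas' trace-free convention in degrees $1$ and $2$. If the compatibility is awkward, I would instead check the long exact sequence degree by degree and verify that the connecting maps respect the identity splitting.

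Finally, I would take cohomology of this triangle. The terms in degree $1$ give
\[
0\to\Ext^1_D(E_Z,E_Z)_0\to\Ext^1_X(F_Z,F_Z)_0\to\Hom_D(E_Z,E_Z\otimes N)\to\Ext^2_D(E_Z,E_Z)_0=0,
\]
which is \eqref{eq:general-tangent} once the identification $\Hom_D(E_Z,E_Z\otimes N)\cong H^0(D,N)$ is inserted. The terms in degree $2$ give
\[
0=\Ext^2_D(E_Z,E_Z)_0\to\Ext^2_X(F_Z,F_Z)_0\to\Ext^1_D(E_Z,E_Z\otimes N)\to\Ext^3_D(E_Z,E_Z)_0=0,
\]
which is \eqref{eq:general-obstruction}.
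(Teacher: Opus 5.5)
Your proposal is correct and follows the paper's argument: Lemma~\ref{lem:ext-sequence} with $E=G=E_Z$, then $\mathcal Hom_D(E_Z,E_Z)\cong\OO_D$, then Serre duality with $K_D\otimes N^{\dual}\cong K_X|_D$, then discarding the scalar part. You carry it out more carefully in two places:
\begin{itemize}
\item Your count $\Ext^2_D(E_Z,E_Z)\cong H^0(K_D)^{\dual}\cong H^2(\OO_D)$ actually proves $\Ext^2_D(E_Z,E_Z)_0=0$. The paper only cites smoothness of $D^{[n]}$, and smoothness alone does not force an obstruction space to vanish.
\item Applying the octahedral axiom to $R\Gamma(\OO_D)\to R\Hom_D(E_Z,E_Z)\to R\Hom_X(F_Z,F_Z)$ gives your trace-free triangle directly, with no separate check of connecting maps. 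This makes precise what the paper calls ``taking trace-free parts''.
\end{itemize}

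One side remark in your proposal is wrong. Removing $R\Gamma(\OO_D)$ is not Thomas' trace-free convention, and $H^1(\OO_X)=H^2(\OO_X)=0$ does not make it so. Thomas' trace lands in $H^i(\OO_X)$, so for this rank-zero $F_Z$ on a Fano threefold his trace-free groups in degrees $1,2$ are the full $\Ext^i_X(F_Z,F_Z)$. These differ from yours by contributions of $H^1(\OO_D)$ and $H^2(\OO_D)$.

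For a concrete check, take a smooth quartic $D\subset\PP^3$. Then \eqref{eq:chi-general} gives $\chi_X(F_Z,F_Z)=\chi(\OO_D)-\chi(N)=2-34$. Reading \eqref{eq:general-tangent} and \eqref{eq:general-obstruction} with full Ext groups would instead give $1-34$.

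So the proposition holds with your surface-trace definition, which is evidently what the paper intends, but not with Thomas'. The two readings agree when $H^1(\OO_D)=H^2(\OO_D)=0$, as in the examples of \S\ref{sec:examples}.
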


\begin{proof}
Tensoring by $L$ does not affect internal Homs between rank-one torsion-free sheaves. Hence
\[
\Hom_D(E_Z,E_Z\otimes N)\cong \Hom_D(\I_Z,\I_Z\otimes N).
\]
As in the proof of Lemma~\ref{lem:surface-ext-vanishing}, $\mathcal Hom_D(\I_Z,\I_Z)\cong \OO_D$, hence
\[
\mathcal Hom_D(E_Z,E_Z\otimes N)\cong N
\]
and therefore
\[
\Hom_D(E_Z,E_Z\otimes N)\cong H^0(D,N).
\]
Next,
\[
\Ext^2_D(E_Z,E_Z\otimes N)
\cong
\Hom_D(E_Z,E_Z\otimes K_D\otimes N^{-1})^\dual
\cong
\Hom_D(E_Z,E_Z\otimes K_X|_D)^\dual
\]
by Serre duality and adjunction.
Since $-K_X|_D$ is ample and $E_Z$ is Gieseker stable on $D$, the target has strictly smaller reduced Hilbert polynomial than $E_Z$, so this Hom group vanishes.
Thus $\Ext^2_D(E_Z,E_Z\otimes N)=0$.

Apply Lemma~\ref{lem:ext-sequence} with $Y=X$, $S=D$, and $E=G=E_Z$.
The term $\Ext^2_D(E_Z,E_Z)_0$ vanishes because the moduli of rank-one torsion-free sheaves on $D$ with fixed determinant $L$ is $D^{[n]}$, which is smooth.
Taking trace-free parts therefore yields \eqref{eq:general-tangent} and \eqref{eq:general-obstruction}.
\end{proof}

\begin{lemma}\label{lem:kodaira-N}
For the normal bundle $N=\OO_D(D)$ one has
\[
H^i(D,N)=0\qquad (i>0).
\]
Consequently the class
\[
\Ttw_D^{[n]}(N):= \chi(D, N)[\mathcal{O}_{ D^{[n]} } ] - Rp_*R\mathcal Hom(\mathcal J,\mathcal J\otimes N)
\in K^0(D^{[n]})
\]
is represented by an honest rank-$2n$ vector bundle, where $\mathcal J$ is the universal ideal sheaf on $D\times D^{[n]}$.
\end{lemma}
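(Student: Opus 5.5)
The vanishing will come from Kodaira vanishing on $D$, and the vector-bundle statement from cohomology and base change, exactly as in the proof of Theorem~\ref{thm:obstruction-bundle}.

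\textbf{Step 1 (vanishing).} By adjunction $K_D\cong (K_X\otimes\OO_X(D))|_D$, so
\[
N=\OO_D(D)\cong K_D\otimes (-K_X)|_D .
\]
Since $X$ is Fano, $-K_X$ is ample, and its restriction $(-K_X)|_D$ is ample on the smooth projective surface $D$. Kodaira vanishing in the form $H^i(D,K_D\otimes A)=0$ for $i>0$ and $A$ ample then gives $H^i(D,N)=0$ for $i>0$. In particular $\chi(D,N)=h^0(D,N)$.

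\textbf{Step 2 (fiberwise Ext groups).} Let $\mathcal C:=Rp_*R\mathcal Hom(\mathcal J,\mathcal J\otimes N)$. Since $\mathcal J$ is flat and perfect over $D^{[n]}$ and $p$ is smooth projective of relative dimension two, $\mathcal C$ is perfect and commutes with base change. Its fiber at $Z$ is $R\Hom_D(\I_Z,\I_Z\otimes N)$. By Proposition~\ref{prop:general-ext}, applied with $L=\OO_D$ since tensoring by $L$ does not affect these groups, we have
\[
\Ext^2_D(\I_Z,\I_Z\otimes N)=0,\qquad \Hom_D(\I_Z,\I_Z\otimes N)\cong H^0(D,N).
\]
Ext groups in degrees above two vanish on a surface. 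By Riemann--Roch,
\[
\chi_D(\I_Z,\I_Z\otimes N)=\int_D \ch(\I_Z)^\dual\ch(\I_Z)\ch(N)\td(D)=\chi(D,N)-2n,
\]
because $\ch(\I_Z)^\dual\ch(\I_Z)=1-2n[\mathrm{pt}]$. Combined with Step 1, this gives
\[
\dim\Ext^1_D(\I_Z,\I_Z\otimes N)=h^0(N)-\chi(N)+2n=2n,
\]
which is constant in $Z$.

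\textbf{Step 3 (globalize).} Because the fiber dimensions are constant, cohomology and base change for perfect complexes shows that $\mathcal H^0(\mathcal C)$ and $\mathcal H^1(\mathcal C)$ are locally free of ranks $h^0(N)$ and $2n$, and all other cohomology sheaves vanish.

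Multiplication by sections gives a map $H^0(D,N)\otimes\OO_{D^{[n]}}\cong p_*\mathcal{H}om(\mathcal J,\mathcal J)\otimes N\to\mathcal H^0(\mathcal C)$. This map is a fiberwise isomorphism, hence an isomorphism. By Step 1, $\chi(D,N)[\OO]$ equals this trivial bundle of rank $h^0(N)$, so the two terms cancel in $K^0(D^{[n]})$. The class $\Ttw_D^{[n]}(N)$ is then represented by $\mathcal H^1(\mathcal C)=\mathcal Ext^1_p(\mathcal J,\mathcal J\otimes N)$, a vector bundle of rank $2n$.

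\textbf{Main obstacle.} The only point needing care is the hypothesis of Kodaira vanishing: we need $(-K_X)|_D$ ample and $D$ smooth projective, and both hold. The $\Ext^2$ vanishing is already supplied by Proposition~\ref{prop:general-ext}. If one instead wanted the vanishing of $H^i(D,N)$ without using Step 1, one could use $H^i(X,\OO_X(D))$ together with $H^{i+1}(X,\OO_X)=0$. The adjunction route via Kodaira is more direct, and it is the one I would use.
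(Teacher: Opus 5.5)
Your proposal is correct and follows the paper's proof. Like the paper, you get $H^i(D,N)=0$ from adjunction plus Kodaira vanishing, take the $\Ext^2$ vanishing from Proposition~\ref{prop:general-ext}, and use Riemann--Roch to get rank $2n$. You also write out the globalization (cohomology and base change, and trivializing $\mathcal H^0$ via multiplication by sections), which the paper omits in this lemma but carries out in the proof of Theorem~\ref{thm:obstruction-bundle}.
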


\begin{proof}
Adjunction gives
\[
N-K_D=-K_X|_D,
\]
which is ample because $X$ is Fano.
By Kodaira vanishing, $H^i(D,N)=0$ for $i>0$.
The vanishing of $\Ext^2_D(E_Z,E_Z\otimes N)$ from Proposition~\ref{prop:general-ext} then shows that the $K$-class $\Ttw_D^{[n]}(N)$ is represented by the vector bundle with fiber $\Ext^1_D(E_Z,E_Z\otimes N)$.
It remains to compute the rank.
By Riemann--Roch,
\[
\chi_D(E_Z,E_Z\otimes N)=\chi(N)-2n.
\]
Since $\Hom_D(E_Z,E_Z\otimes N)=H^0(D,N)=\chi(N)$ and $\Ext^2_D(E_Z,E_Z\otimes N)=0$, we obtain
\[
\dim \Ext^1_D(E_Z,E_Z\otimes N)=2n.
\]
Thus the bundle has rank $2n$.
\end{proof}

\begin{definition}\label{def:rigid}
We call the divisor $D$ \emph{rigid} if $H^0(D,N)=0$.
Equivalently, the linear system $|D|$ is zero-dimensional.
\end{definition}

The numerical equivalence in Definition~\ref{def:rigid} follows from the exact sequence
\[
0\to \OO_X\to \OO_X(D)\to N\to 0
\]
and the vanishings $H^1(X,\OO_X)=0$ and $H^0(X,\OO_X)=\C$ on a Fano threefold.

\begin{theorem}\label{thm:rigid-component}
Assume that $D$ is rigid and that $H^1(D,\OO_D)=0$.
Then the divisor-supported stable-sheaf component
\[
\M_{D,L,n}(X):=\{\iota_*(\I_Z\otimes L): Z\in D^{[n]}\}
\]
is isomorphic to $D^{[n]}$.
Its standard truncated perfect obstruction theory for rank-zero stable sheaves has obstruction bundle $\Ttw_D^{[n]}(N)$ and virtual class
\begin{equation}\label{eq:rigid-vfc}
\vfc{\M_{D,L,n}(X)}=c_{2n}\Bigl(\Ttw_D^{[n]}(N)\Bigr)\cap [D^{[n]}].
\end{equation}
In particular, the virtual dimension is $0$.
\end{theorem}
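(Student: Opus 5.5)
I would follow the proof of Theorem~\ref{thm:classification} and Theorem~\ref{thm:obstruction-bundle}, with the moving base $B$ replaced by a point. The plan has three parts: identify $\M_{D,L,n}(X)$ with $D^{[n]}$, identify the obstruction sheaf with $\Ttw_D^{[n]}(N)$, and conclude with Behrend--Fantechi on a smooth space.

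\textbf{Step 1: the family and the classifying morphism.} Let $\mathcal J\subset\OO_{D\times D^{[n]}}$ be the universal ideal. The family $\mathcal F:=(\iota\times\mathrm{id})_*(\mathcal J\otimes L)$ is flat over $D^{[n]}$; flatness follows from the universal ideal sequence, exactly as in the proof of Theorem~\ref{thm:classification}. Its fibers $F_Z$ are stable, because $E_Z$ is stable by Lemma~\ref{lem:rank-one-stable} and pushforward preserves reduced Hilbert polynomials. This gives a classifying morphism $\Phi\colon D^{[n]}\to\M^{\mathrm{st}}_X$.
\begin{itemize}
\item $\Phi$ is injective on points: the support $D$, the sheaf $E_Z$ with $\iota_*E_Z\cong F_Z$, the double dual $E_Z^{\dual\dual}\cong L$, and the quotient $L/E_Z$ (hence $Z$) are all recovered uniquely.
\item $\Phi$ is proper, since $D^{[n]}$ is projective, so its image $\M_{D,L,n}(X)$ is closed.
\end{itemize}

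\textbf{Step 2: openness and smoothness of the image.} Here the hypotheses enter, and I expect this to be the main obstacle. Run Lemma~\ref{lem:ext-sequence} with $E=G=E_Z$.
\begin{itemize}
\item $\Hom_D(E_Z,E_Z\otimes N)=H^0(D,N)=0$ by rigidity.
\item $\Ext^1_D(E_Z,E_Z)\cong \Ext^1_D(\I_Z,\I_Z)$ has dimension $h^1(\OO_D)+2n=2n$, since $H^1(D,\OO_D)=0$ kills the Picard directions.
\end{itemize}
So $\Ext^1_X(F_Z,F_Z)\cong\Ext^1_D(E_Z,E_Z)$ has dimension $2n=\dim D^{[n]}$, and $d\Phi$ is an isomorphism on tangent spaces. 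Strictly, one needs the full rather than the trace-free $\Ext^1_D$, so I would check directly that $h^1(\OO_D)=0$ makes them agree.

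It follows that $\Phi$ is unramified and injective, and that the moduli space has embedding dimension $2n$ at every point of the image. Since $D^{[n]}$ is smooth of dimension $2n$ and maps injectively in, the image has local dimension at least $2n$. Hence the moduli space is smooth of dimension $2n$ along the image, and $\Phi$ is an isomorphism onto an open and closed subscheme. This open-and-closed subscheme is the component $\M_{D,L,n}(X)$.

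The point needing care is that no other sheaves deform into this locus. This is handled by equality of dimensions: an open and closed smooth subscheme is a union of components. The same argument as in Theorem~\ref{thm:classification} (bijective finite map onto a normal target) then yields the isomorphism.

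\textbf{Step 3: obstruction sheaf and virtual class.} On a Fano threefold, $H^1(\OO_X)=H^2(\OO_X)=0$. The vanishing of $\Ext^3_X(F_Z,F_Z)\cong\Hom_D(E_Z,E_Z\otimes K_X|_D)^\dual$ follows as in Proposition~\ref{prop:general-ext}, since $-K_X|_D$ is ample. So the truncated obstruction theory $\tau^{[1,2]}$ of Theorem~\ref{thm:obstruction-bundle} applies.

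Relativizing Lemma~\ref{lem:ext-sequence} over $D^{[n]}$ with cohomology and base change gives
\[
\mathcal Ext^2_{p}(\mathcal F,\mathcal F)\cong\mathcal Ext^1_p(\mathcal J,\mathcal J\otimes N),
\]
which is locally free of rank $2n$ by Lemma~\ref{lem:kodaira-N}. In the $K$-class $\Ttw_D^{[n]}(N)$ the $\mathcal Ext^0$ term is $H^0(N)\otimes\OO=0$, and $\chi(D,N)=h^0(N)=0$ by the same lemma. Hence $\Ttw_D^{[n]}(N)$ is exactly this $\mathcal Ext^1$ bundle.

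Since $D^{[n]}$ is smooth and the obstruction sheaf is a bundle, \cite{BF} gives $\vfc{\M_{D,L,n}(X)}=c_{2n}(\Ttw_D^{[n]}(N))\cap[D^{[n]}]$. The virtual dimension is $2n-2n=0$.
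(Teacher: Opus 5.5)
Your overall structure matches the paper's: compare tangent spaces via Lemma~\ref{lem:ext-sequence} using $H^0(D,N)=0$ and $H^1(D,\OO_D)=0$, identify the obstruction fibres with $\Ext^1_D(E_Z,E_Z\otimes N)$, and apply Behrend--Fantechi on the smooth space $D^{[n]}$. Your Step~2 is a more complete version of the paper's one-line ``hence the divisor-supported component is $D^{[n]}$'': a proper, injective, unramified map into a moduli space whose embedding dimension along the image is $2n$ is an isomorphism onto an open and closed subscheme.

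The step that does not follow as written is the isomorphism $\mathcal Ext^2_p(\mathcal F,\mathcal F)\cong\mathcal Ext^1_p(\mathcal J,\mathcal J\otimes N)$ in Step~3. With $\Hom_D(E_Z,E_Z\otimes N)=0$, Lemma~\ref{lem:ext-sequence} only gives
\[
0\to\Ext^2_D(E_Z,E_Z)\to\Ext^2_X(F_Z,F_Z)\to\Ext^1_D(E_Z,E_Z\otimes N)\to 0,
\qquad
\Ext^2_D(E_Z,E_Z)\cong H^0(D,K_D)^\dual .
\]
So a priori $\dim\Ext^2_X(F_Z,F_Z)=2n+p_g(D)$, and both the rank-$2n$ claim and $\vdim=0$ depend on $p_g(D)=0$. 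In the planar case this term was killed by Lemma~\ref{lem:surface-ext-vanishing}; nothing in your argument kills it here. Trace-free bookkeeping does not help. Only $\Ext^2_D(E_Z,E_Z)_0$ vanishes automatically in rank one, whereas $\Ext^2_X(F_Z,F_Z)$ equals its own trace-free part because $H^2(\OO_X)=0$. For the same reason, citing \eqref{eq:general-obstruction} does not close the gap by itself, since its proof only disposes of the trace-free part.

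The missing vanishing does hold under the hypotheses. From $0\to\OO_X(-D)\to\OO_X\to\OO_D\to 0$ and $H^2(\OO_X)=H^3(\OO_X)=0$ one gets
\[
H^2(D,\OO_D)\cong H^3(X,\OO_X(-D))\cong H^0(X,\OO_X(K_X+D))^\dual .
\]
A nonzero section $s$ of $\OO_X(K_X+D)$ would, by multiplication, embed $H^0(X,\OO_X(-K_X))$ into $H^0(X,\OO_X(D))$. On a Fano threefold $h^0(-K_X)=\tfrac12(-K_X)^3+3\ge 4$, so this would contradict rigidity, which (as noted after Definition~\ref{def:rigid}) means $h^0(\OO_X(D))=1$. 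Hence $p_g(D)=0$ and the relative term $\mathcal Ext^2_p(\mathcal J,\mathcal J)$ vanishes. Your relativized sequence then gives the stated isomorphism, and the rest of Step~3 goes through.
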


\begin{proof}
Because $D$ is rigid, there are no first-order deformations of the support divisor inside $X$.
The vanishing $H^1(D,\OO_D)=0$ shows that $L$ has no infinitesimal deformations in $\Pic(D)$, so there are no additional Picard directions.
Thus Proposition~\ref{prop:general-ext} shows that the tangent space to the ambient stable-sheaf moduli along the divisor-supported locus is exactly the tangent space to $D^{[n]}$.
Hence the divisor-supported component is $D^{[n]}$.
The obstruction space at a point is identified with $\Ext^1_D(E_Z,E_Z\otimes N)$ by \eqref{eq:general-obstruction}, so Lemma~\ref{lem:kodaira-N} identifies the obstruction bundle with $\Ttw_D^{[n]}(N)$.
Since $D^{[n]}$ is smooth, the virtual class is the top Chern class of the obstruction bundle.
The rank computation in Lemma~\ref{lem:kodaira-N} shows that the virtual dimension is zero.
\end{proof}

\begin{theorem}\label{thm:general-rigid-series}
For a rigid divisor $D\subset X$ satisfying $H^1(D,\OO_D)=0$, one has
\begin{equation}\label{eq:general-rigid-series}
\sum_{n\ge 0}q^n\int_{\vfc{\M_{D,L,n}(X)}}1
=
\sum_{n\ge 0}q^n\int_{D^{[n]}}e\Bigl(\Ttw_D^{[n]}(N)\Bigr)
=
\prod_{m\ge 1}(1-q^m)^{-\delta(D,X)},
\end{equation}
where
\begin{equation}\label{eq:delta-def}
\delta(D,X):=c_2(X)\cdot D+D^3.
\end{equation}
\end{theorem}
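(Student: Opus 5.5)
The plan is to combine Theorem~\ref{thm:rigid-component} with the Carlsson--Okounkov formula already used in Theorem~\ref{thm:planar-series}, and then identify the exponent by an intersection-theoretic computation on $X$.

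\textbf{Step 1.} By Theorem~\ref{thm:rigid-component}, $\M_{D,L,n}(X)\cong D^{[n]}$ and the virtual class is $c_{2n}(\Ttw_D^{[n]}(N))\cap[D^{[n]}]$. Lemma~\ref{lem:kodaira-N} shows that $\Ttw_D^{[n]}(N)$ is an honest rank-$2n$ bundle, so its top Chern class is its Euler class. This gives the first equality in \eqref{eq:general-rigid-series}.

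\textbf{Step 2.} Apply \cite[Cor.~1]{CO} with $S=D$ and the line bundle $N$. Since the $K$-class in Lemma~\ref{lem:kodaira-N} is exactly the Carlsson--Okounkov class $\Ttw_D^{[n]}(N)$ of Definition~\ref{def:twisted-tangent-relative}, this yields
\[
\prod_{m\ge1}(1-q^m)^{(N,K_D-N)-e(D)}.
\]
It therefore remains to prove $e(D)-(N,K_D-N)=c_2(X)\cdot D+D^3$.

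\textbf{Step 3.} This identity is the only substantive computation. Write $N=D|_D$ and $K_D=(K_X+D)|_D$ by adjunction. Then
\[
(N,K_D-N)=\int_D D|_D\cdot K_X|_D=K_X\cdot D^2 .
\]
For $e(D)=c_2(T_D)$, use the normal bundle sequence $0\to T_D\to T_X|_D\to N\to 0$. It gives $c(T_D)=c(T_X)|_D\,(1+D|_D)^{-1}$, so
\[
c_2(T_D)=c_2(X)\cdot D-c_1(X)\cdot D^2+D^3=c_2(X)\cdot D+K_X\cdot D^2+D^3 .
\]
Subtracting, the $K_X\cdot D^2$ terms cancel and we get $e(D)-(N,K_D-N)=c_2(X)\cdot D+D^3=\delta(D,X)$, as claimed.

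\textbf{Where care is needed.} The main risk is sign conventions. One must check that the pairing $(L,K_S-L)$ in \cite{CO} is the intersection $\int_S c_1(L)(K_S-c_1(L))$, matching its use in Theorem~\ref{thm:planar-series}, where it gave $-4$ for $\PP^2$. As a sanity check, take $D=H\subset\PP^3$: then $c_2(X)\cdot H=6$ and $H^3=1$, giving $7$, in agreement with the planar exponent.

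None of the hypotheses beyond those of Theorem~\ref{thm:rigid-component} enter Step 3; it is pure Chern class algebra.
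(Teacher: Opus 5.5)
Your proposal is correct and is essentially the paper's proof. Theorem~\ref{thm:rigid-component} together with Lemma~\ref{lem:kodaira-N} gives the first equality, Carlsson--Okounkov with $S=D$ and $L=N$ gives the product, and the exponent is identified using adjunction and $0\to T_D\to T_X|_D\to N\to 0$. The only difference is cosmetic: the paper first rewrites $e(D)-(N,K_D-N)$ as $\int_D c_2(T_D\otimes N)$ before applying the normal sequence, whereas you compute $(N,K_D-N)=K_X\cdot D^2$ and $c_2(T_D)$ separately and let the $K_X\cdot D^2$ terms cancel.
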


\begin{proof}
By Theorem~\ref{thm:rigid-component}, the virtual numbers are the Euler classes of the twisted tangent bundles.
Applying Carlsson--Okounkov with $S=D$ and $L=N$ gives
\[
\sum_{n\ge 0}q^n\int_{D^{[n]}}e\Bigl(\Ttw_D^{[n]}(N)\Bigr)
=
\prod_{m\ge 1}(1-q^m)^{(N,K_D-N)-e(D)}.
\]
It remains to identify the exponent.
For a rank-two bundle $E$ and a line bundle $M$ on a surface,
\[
c_2(E\otimes M)=c_2(E)+c_1(E)c_1(M)+c_1(M)^2.
\]
Thus
\[
-(N,K_D-N)+e(D)=\int_D c_2\bigl(T_D\otimes N\bigr).
\]
Using the exact sequence
\[
0\to T_D\to T_X|_D\to N\to 0,
\]
one computes
\[
\int_D c_2\bigl(T_D\otimes N\bigr)=c_2(X)\cdot D+D^3.
\]
This is exactly \eqref{eq:delta-def}.
\end{proof}

\begin{remark}\label{rem:moving-vs-rigid}
The same fiber integral appears for moving divisors.
If a smooth family of divisors with normal bundle $N$ is parametrized by a smooth base of dimension $h^0(D,N)$, then Proposition~\ref{prop:general-ext} shows that the ambient virtual dimension is precisely the dimension of the support-divisor family.
The rigid case is characterized by the vanishing of these support directions.
The planar theory of \S\ref{sec:series} is the case $D=H\subset \PP^3$, where $h^0(D,N)=3$.
\end{remark}

\section{Examples of rigid divisors}\label{sec:examples}

\subsection{The exceptional divisor in $\operatorname{Bl}_p\PP^3$}
Let $\widetilde X=\operatorname{Bl}_p\PP^3$, and let $E\subset \widetilde X$ be the exceptional divisor.
Then $E\cong \PP^2$ and
\[
N_{E/\widetilde X}\cong \OO_{\PP^2}(-1).
\]
Hence $E$ is rigid.
Taking $L=\OO_E$, Theorem~\ref{thm:general-rigid-series} gives a divisor-supported virtual theory on $E^{[n]}$ with generating series
\[
\prod_{m\ge 1}(1-q^m)^{-\delta(E,\widetilde X)}.
\]
Now
\[
\delta(E,\widetilde X)=\int_E c_2\Bigl(T_{\PP^2}\otimes \OO_{\PP^2}(-1)\Bigr)=1,
\]
so
\begin{equation}\label{eq:blowup-series}
\sum_{n\ge 0}q^n\int_{\vfc{\M_{E,\OO_E,n}(\widetilde X)}}1
=
\prod_{m\ge 1}(1-q^m)^{-1}.
\end{equation}
Thus the rigid divisor series is, up to the usual prefactor, $\eta(\tau)^{-1}$.

\subsection{A rigid section in a Fano $\PP^1$-bundle}
Let
\[
S=\PP^1\times \PP^1,
\qquad
\mathcal E=\OO_S\oplus \OO_S(-1,-1),
\qquad
Y:=\PP_S(\mathcal E),
\]
where $\PP_S(\mathcal E)$ parametrizes one-dimensional quotients of $\mathcal E$.
Let $\pi\colon Y\to S$ be the projection and let $\xi=c_1(\OO_Y(1))$.
The canonical bundle formula for a projective bundle gives
\[
-K_Y=2\xi+\pi^*\OO_S(3,3),
\]
so $Y$ is Fano.
Let $D\subset Y$ be the section corresponding to the quotient
\[
\mathcal E\twoheadrightarrow \OO_S(-1,-1).
\]
Then $D\cong S$ and its normal bundle is
\[
N_{D/Y}\cong \OO_S(-1,-1).
\]
Hence $D$ is rigid.
For $L=\OO_D$, Theorem~\ref{thm:general-rigid-series} gives
\[
\sum_{n\ge 0}q^n\int_{\vfc{\M_{D,\OO_D,n}(Y)}}1
=
\prod_{m\ge 1}(1-q^m)^{-\delta(D,Y)}.
\]
A direct computation on $S=\PP^1\times \PP^1$ yields
\[
\delta(D,Y)=\int_S c_2\Bigl(T_S\otimes \OO_S(-1,-1)\Bigr)=2,
\]
so
\begin{equation}\label{eq:p1p1-series}
\sum_{n\ge 0}q^n\int_{\vfc{\M_{D,\OO_D,n}(Y)}}1
=
\prod_{m\ge 1}(1-q^m)^{-2}.
\end{equation}
Thus the rigid divisor series is, again up to the usual prefactor, $\eta(\tau)^{-2}$.

\begin{remark}\label{rem:gs}
Theorem~\ref{thm:general-rigid-series} and the two examples above are closely parallel to the surface-supported theories studied in \cite{GSmod,GSlinear}.
What is special here is that the moving and rigid cases are governed by one and the same bundle, namely the twisted tangent bundle associated to the normal bundle of the support divisor.
\end{remark}

\section*{Acknowledgements}
The author thanks Dominic Joyce for inspiring this project. The author also thanks Artan Sheshmani for many helpful conversations on Donaldson--Thomas theory. 

\section*{AI acknowledgements}
The author is greatly indebted to many conversations with ChatGPT 5.6 Pro which made this paper possible.

\end{document}